\theoremstyle{plain} 
 \newtheorem{theorem}{Theorem}[section]   
 \newtheorem{proposition}[theorem]{Proposition}
\theoremstyle{definition} 
 \newtheorem{example}{Example}[section]
\theoremstyle{remark} 
\numberwithin{equation}{section}  
\numberwithin{figure}{section}
\numberwithin{table}{section}
 \title{\textbf{Polar differentiation matrices for the Laplace equation  in the disk subjected to nonhomogeneous Dirichlet, Neumann and Robin boundary conditions and the biharmonic equation subjected to nonhomogeneous Dirichlet conditions }}
 \author{Marcela Molina-Meyer$^{1}$, Frank Richard Prieto Medina\\
Universidad Carlos III de Madrid, Departamento de Matem\'aticas\\
Avenida de la Universidad 30, 28911, Legan\'es, Spain}
\date{\today}
\begin{document}



 \maketitle
 \begin{abstract}
 
In this paper we present a pseudospectral method in the disk.  Unlike the methods known until now, the disk  is not duplicated. Moreover, we solve  the Laplace equation subjected to nonhomogeneous Dirichlet, Neumann and Robin boundary conditions and the biharmonic equation subjected to nonhomogeneous Dirichlet conditions by only using the elements of the corresponding  differentiation matrices. It is worth noting that we don not  use any quadrature, do not need to solve any decoupled system of ordinary differential equations, do not use any pole condition and do not require any lifting. We  solve several numerical examples showing that the spectral convergence is being met. The pseudospectral method developed in this paper can be applied to estimate Sherwood numbers integrating the mass flux to the disk   and it can be easily implemented to solve Lotka-Volterra systems and  nonlinear problems involving chemical reactions.

\end{abstract}

  \smallskip
  \emph{Keywords}: Nonhomogeneous Dirichlet, Neumann and Robin boundary conditions. Laplace equation. Biharmonic equation. Differentiation matrices.  Chebyshev Fourier collocation  points. Nonlinear problems. 


\footnotetext[1]{Corresponding author. 
E-mail address: mmolinam@math.uc3m.es (M. Molina-Meyer).}


\newpage

\section{Introduction}           
The Laplace operator is widely used in mathematical models of macroscopic chemotaxis, hydrodynamic,  semiconductors, mass transfer, growth of species or in the research and development of new acoustic and optical  instruments. See   \cite{markovich} and \cite{metas}. Concurrently, the biharmonic operator is present in mathematical models of  elasticity, such as the flexure of thin plates, or  in the dynamics of bio-fluids,  such as arterial blood flows. See \cite{HughesMarsden},  \cite{Sweers}  and \cite{Selvaduarai}.  In electrochemical experiments, the diffusion coefficient is determine using a rotating disk electrode tecnique by measuring Sherwood numbers,  \cite{Igor}.  The pseudospectral method developed in this paper can be applied to estimate Sherwood numbers integrating the mass flux to the disk, \cite{rotatingdisk} and \cite{Igor}. Moreover, it can be implemented to solve Lotka Volterra systems, \cite{JLGMMM},  and  nonlinear problems  involving chemical reactions, \cite{finlayson}.

 Sometimes, as in \cite{JLGEDM}, \cite{Rosa} , \cite{JLGMMM}, \cite{JLGMMMiso},  \cite{Hsu}, \cite{JLGMMMTELL} and \cite{LGMMR} the simulations of solutions of some non-linear equations and non-linear systems allow us to conjecture open problems.  In fact  more realistic mathematical models, of engineering problems, ecological and biological phenomena, can be derived by using variable coefficients, nonlinear terms and non-homogeneous boundary conditions. 
In \cite{Rosa}, \cite{JLGMMM}, \cite{JLGEDM}, \cite{JLGMMMiso}  and \cite{JLGMMMTELL}, Fourier pseudospectral methods are  used, and in \cite{Hsu} and \cite{LGMMR}, Chebyshev pseudospectral methods are applied. Very recently, \cite{young}  reviewed  the treatment of boundary conditions involving fluxes in orthogonal collocation methods. Although one dimensional domains are considered in all these papers. 

Nowadays it is necessary to develop efficient and accurate numerical methods to finely analyze the behavior of some non-radially symmetric solutions of two dimensional linear and non-linear equations involving the Laplace and the biharmonic  operators.   In this respect, the differentiation matrices obtained in this paper allow to calculate the numerical  solutions in the disk subject to all types  of non-homogeneous boundary conditions, whether they are Dirichlet, Neumann or Robin. Moreover, this paper offers all the calculations needed to solve the Laplace and the biharmonic nonhomogeneous  equations by only using the elements of the differentiation matrices.  

Unfortunately, the methods used  in \cite{BernardidiscoNavier}, \cite{BernardidiscoLaplace}, \cite{shen1}, \cite{shen2} \cite{Treflibro}, \cite{Forpolar}, \cite{HWK} and  \cite{Towsendolver} can only be applied in case of homogeneous Dirichlet boundary conditions. All this papers propose to use a lifting in case of nonhomogeneous Dirichlet boundary conditions.  In fact, none of these references solve problems subject to  Neumann or Robin boundary conditions. Using a lifting has many disadvantages, it is necessary to calculate it beforehand because it is needed  to reformulate the original problem, it implies that certain conditions of smoothness on the boundary conditions must be assumed and in the case of having boundary conditions provided by a table, these data must first be interpolated. Hence, using a lifting significantly increases the computational cost. However, the pseudo-spectral method presented in this paper require no lifting as we compute  the polar  differentiation matrices differentiating the interpolation  polynomial in the disk that satisfies the nonhomogeneous boundary conditions. In conclusion, our method is a direct method with lower computational cost.

Furthermore, collocation methods are well known because of their advantages:  they are direct and easy to implement and  in the case of Chebyshev Gauss Lobatto (CGL) collocation points, if the data are sufficiently smooth, the approximate solution has spectral accuracy. In \cite{R}, \cite{CMAT}, \cite{QV}, \cite{Shenlibro} and \cite{CY} the  convergence and stability  of the  collocation method are demonstrated in cases where the discrete bilinear form is exact and the collocation method matches a Galerkin method.

 In fact, in \cite{Towsendolver} and \cite{DriscollHale} to incorporate the boundary  conditions some rows of the matrix obtained by the Tau method are removed.  Unfortunately,  excluding  rows eliminates  some projections of the best approximation whose consequence could be a drastic  undesired change in the numerical solution.  In addition, in \cite{DriscollHale} the interpolation polynomial of CGL points (extrema of the first-kind Chebyshev polynomial) satisfies the boundary conditions, but the equation is asked to be satisfied in a Chebyshev Gauss grid (roots of the first-kind Chebyshev polynomial) of a lower order. Therefore, the resulting  differentiation matrices are  rectangular and do not correspond to any discrete Galerkin method. Moreover, a fictitious point outside the domain is also introduced in \cite{For} resulting in an unstable method according to \cite{DriscollHale}.

In this paper, we propose a method that does not require any pole condition. Unfotunately,  \cite{shen1}, \cite{shen2} \cite{HWK} and \cite{Towsendwright} apply a Fourier Galerkin method which results in a decoupled system of boundary value problems where pole conditions need to be imposed. In particular, \cite{HWK} uses a collocation method for each boundary value problem.

Nevertheless, in \cite{Towsendwright} the  Fourier Chebyshev spectral method is applied in a rectangular domain that corresponds to repeating the disk twice and    the solution should finally be restricted to the sector of the rectangle that corresponds to the positive radii. In \cite{BayonaFonberg} are considered fictitious points outside the disk, but the equation must be satisfied on the boundary what distorts the original problem. Many times the solution does not have the sufficient regularity on the boundary to be able to apply the operator of partial differential equations.  Moreover, in \cite{Trefftz} and \cite{CY} as a consequence of applying Gaussian quadrature, two separated sets of weights are required, one in the interior of the domain and one on the boundary. 

Even more, the importance of the polar differentiation matrices could be inferred from the commentary ''{\it One needs a Fourier Galerkin-Chebyshev collocation method} '' in  Section 3.9 of \cite{CMAT}. To deduce them, we first derived the  trigonometric polynomial corresponding to each concentric circle of radius equal the CGL positive points.  Then, using Corollary 1.47 and Theorem 1.4.2 in \cite {Stenger}, due to the smoothness of the solution and the properties of Dirichlet kernel, we proved that the above interpolation polynomial coincides with the approximate solution proposed in \cite{WD}. Thereof, following the former results of polar sampling in \cite{Stark} and in \cite{libro1},  we obtained the positive CGL points in the radial coordinate. At this time, it should be noted that considering only  positive radii is not an original idea of  \cite{Forpolar}, but to \cite{Marvasti}.

Now, to start with the  Laplace and biharmonic polar differentiation matrices we  introduce  the collocation points in the disk
 
 \begin{equation} \label{puntoscolocacion}
(r_k,\theta_l)= \left(R \,cos ( \dfrac{(k-1) \> \pi}{N_r}), \dfrac{2\pi \> l }{N_{\theta}} \right), \quad 1 \leq k \leq \frac{N_r+1}{2} , \quad 1 \leq l \leq {N_{\theta}}.  
\end{equation}

\noindent Thereupon, from the symmetry property  
\begin{equation} \label{simetria}
u (r_{N_r+2-i},\theta_j)= u(r_i,\theta_{j+\frac{N_\theta}{2}}), \quad 1 \leq j \leq \frac {N_{\theta}}{2}, \quad 1 \leq i \leq \frac{N_r+1}{2},   
\end{equation} 
defined in  \cite{Marvasti}, we obtain the  interpolation polynomial in the disk
\begin{eqnarray} \label{numeincogintro}
u_{\frac{N_r+1}{2},N_{\theta}}(r,\theta) & = & \sum_{k=1}^\frac{N_r+1}{2} \sum_{l=1}^{N_{\theta}} a_{k,l} \left[ S_{N_{\theta}}(\theta-\theta_l) L_k(r) + S_{N_{\theta}}(\theta-\theta_{l+\frac{N_\theta}{2}}) L_{N_r+2-k}(r) \right],
\end{eqnarray}
where  $a_{k,l}=u(r_k,\theta_l)$,   $
S_{N_{\theta}}(\theta) = \dfrac{sin\left( \dfrac{N_{\theta} \theta}{2}\right)}{N_{\theta}\, tan\left( \dfrac{\theta}{2}\right)}$ 
and $L_k$'s are the corresponding Lagrange polynomials.
In particular, $N_{r}$ must be an odd number to avoid the origin being a collocation point and  $N_{\theta}$ must be an even number to be able to apply the properties of the Dirichlet kernel. Specifically, the existence and uniqueness of Fourier Chebyshev interpolation polynomials in the disk were first proved in \cite{Blending} and \cite {shen1}. According to the information at our disposal, the expression  (\ref{numeincogintro}) of the interpolation polynomial in the disk has been obtained for the first time in this paper. Concretely, we obtained the interpolation polynomial  \eqref{numeincogintro} in the disk with a total of  ${\frac {N_r+1} 2} \times N_\theta$  unknown coefficients, corresponding to the values of the numerical solution in the collocation points defined in \eqref{puntoscolocacion}.  Unlike the methods known so far, the disk is not duplicated. We should note here that the first ideas on polar differentiation matrices were developed in \cite{TFMFrank}. 

Thereupon to obtain  polar differentiation matrices we proceeded  in five steps.  First, we imposed that $u_{\frac{N_r+1}{2},N_{\theta}}(r,\theta)$ satisfies the boundary conditions.   Second, we cleared from the equations obtained above, in the case of the Laplace equation, all the values of $u_{\frac{N_r+1}{2},N_{\theta}}(r,\theta)$ on the boundary  and, in the case of the biharmonic equation,  all the values of the two outer circles.  Third, we substituted  all these boundary values in  \eqref{numeincogintro}. Fourth, we applied the Laplace and biharmonic operators in the remaining interior collocation points, respectively. And finally, we developed both operations on block matrices  and Kronecker products obtaining a smaller and less ill conditioned system. 

Moreover, the deduced linear systems have  smaller effective condition numbers, see \cite {Trefftz}. In particular, a finite difference preconditioner for a Fourier-Chebyshev collocation method was  developed  in \cite{WD}, even though in our case it is not indispensable to use a preconditioner as the numerical solutions achieve rapid or spectral convergence. Note that there is no preconditioner used in  \cite{Weideman}, \cite{JLGEDM}, \cite{Treflibro}, \cite{For} or  \cite{Hsu}.

Remarkably, even though there exists no explicit solution for the cases of  piece-wise constant  boundary conditions   of the Laplace equation in the disk, we can accurately calculate the numerical  solution and its convergence can be checked using Poisson's formula. Moreover, despite the fact that there is also no  explicit solution of the biharmonic equation in the disk for piecewise constant boundary conditions, to use the Green Function in \cite{Sweers} could provide an interesting test to verify  the convergence of the numerical solution.

It is noteworthy, that this paper provides a finite rank approximation of the   resolvent operator associated with each boundary value problem whenever the collocation method matches with a Galerkin method, see \cite{Ahues}.  Notwithstanding that, this paper does not use any quadrature, it does not need to differentiate between weights on the boundary and the interior of the domain, it does not need to solve any  uncoupled system of ordinary differential equations and it does not require any  lifting.

So far, no explicit formulas of differentiation matrices associated with one dimensional boundary value problems subjected to nonhomogeneous Neumann or Robin boundary  conditions have been published in the literature, \cite{CY}, \cite{Treflibro},  \cite{For}, \cite{DriscollHale} and \cite{Trefblock} . In this paper, based on the ideas in  \cite{R}, we obtain explicit formulas for these cases. Moreover, through a new approach in which  $N+1$ CGL collocation points are used, we solved the biharmonic equation directly, both in an interval and in the disk. In the case of one-dimensional fourth order equations, as there are two conditions at each end point of the interval, we cleared the values of the interpolation polynomial in the points $x_1,x_2,x_N$ and $x_{N+1}$ in terms of the values of the approximate solution at the remaining inner points, obtaining a system of N-3 equations for the N-3 unknowns. Unfortunately, the  idea of \cite{FunaroHein} for homogeneous boundary conditions, that has been widely used in the literature to solve fourth order equations, see  \cite{Muiteref} and \cite{Treflibro}, can not be applied in the case of nonhomogeneous boundary conditions. Note that liftings are used in \cite{Weideman}. Moreover, for the biharmonic equation if CGL collocation points are considered, the continuous bilinear form is not equal anymore to the discrete bilinear form, which forces in \cite{CY} and \cite{FunaroHein} to choose as collocation points the zeros of the second derivative of the Chebyshev polynomial of order $N$. 

Finally to show how to use differentiation matrices in different types of problems, linear and non-linear, of  second or fourth order, in an interval or in a disk, in each section we have included illustrative numerical examples of each case, all of them showing rapid or exponential convergence.

This paper is organized as follows: Section 2 is concerned with second order one dimensional equations subjected to Dirichlet, Neumann and Robin nonhomogeneous boundary conditions and fourth order one dimensional equations subjected to Dirichlet nonhomogeneous boundary conditions. In Section 3, a detailed deduction of the interpolation polynomial in the disk is given, the Laplace differentiation matrices in polar coordinates are deduced and calculated, using Kronecker products and operations by blocks, for each nonhomogeneous Laplace equation, subjected to Dirichlet, Neumann and Robin nonhomogeneous conditions on the boundary. Lastly, the differentiation matrix for the nonhomogeneous biharmonic equation in the disk is thoroughly deduced and calculated.


\section{Differentiation matrices in one dimension}
\label{1dimension}

To describe our further results, we require some preliminaries about differentiation matrices. First, we consider the CGL nodes \begin{eqnarray}\label{CGL}
y_i  =  cos \left( \dfrac{(i-1) \> \pi}{N}\right), & & i=1,...,N+1.
\end{eqnarray}
as well as its related Lagrange polynomials  
\begin{eqnarray} \label{lag}
\widehat{L}_i(y)=\prod_{\substack{k=1 \\ k \neq i}}^{N+1} \left( \dfrac{y-y_k}{y_i-y_k}\right)
 & & i=1,...,N+1.
\end{eqnarray}
Similarly, we use (\ref{CGL}), we change the variables
\begin{eqnarray}\label{malladonuevo}
x  =  \dfrac{(b-a)\> y + b+a}{2},   
\end{eqnarray}
 and we define
\begin{eqnarray}\label{malladonuevo2}
x_i  =  \dfrac{(b-a)\> y_i + b+a}{2},  & & i=1,...,N+1,   
\end{eqnarray}
which yields to the following Lagrange polynomials 
\begin{eqnarray}
L_i(x)=L_i\left(\dfrac{(b-a)\> y + b+a}{2}\right)= \widehat{L}_i(y). &&
\end{eqnarray}

Thereupon, we consider the differential equation 
 \begin{eqnarray}\label{problemaunidimensional}
\dfrac{d^{\gamma}u}{dx^{\gamma}} + F(x,u) = 0, &  x\in(a,b),
\end{eqnarray} 


\noindent whose suitable regular solution  $u : [a,b] \rightarrow \mathbb{R}$ might satisfy either Dirichlet, Neumann or Robin homogeneous or nonhomogeneous boundary conditions. In particular,  $F(x,u)$ might be a linear or a non linear function and the index $\gamma$ might be either 2 or 4. It is the purpose of this article to approximate the solution of (\ref{problemaunidimensional}) by the interpolation polynomial $P_N(x)$ of $u(x)$ of degree $N$, satisfying $P_N(x_i)=u(x_i), \; i=1,...,N+1.$ Consequently, we define $u_N(x)$  as  
\begin{eqnarray}\label{PI}
u_N(x) = P_N(x) \coloneqq \sum_{i=1}^{N+1} a_i\> L_i(x).
\end{eqnarray} 
In this case, $a_i = u(x_i)$, for every $i=1,...,N+1$. 
We observe that the approximation of the first derivative of $u$ at  $x=x_i$  is:
\begin{eqnarray}
\dfrac{du}{dx}(x_i) \approx \dfrac{du_N}{dx}(x_i) = \dfrac{dP_N}{dx}(x_i) = \sum_{j=1}^{N+1} a_j\> \dfrac{dL_j}{dx}(x_i) = \left(\dfrac{2}{b-a} \right) \sum_{j=1}^{N+1} a_j\> \dfrac{\widehat{L}_j}{dy}(y_i).
\end{eqnarray} 
Thus, the pseudo-spectral derivative, which we will denote as $D_N$, is given by:
 \begin{eqnarray}\label{Matrizprimeraderivada}
(D_N)_{i\>j} = \left(\dfrac{2}{b-a}\right) \dfrac{d\widehat{L}_j}{dy}(y_i), & &1\leq i,j \leq N+1.
\end{eqnarray} Furthermore, the $m$-th pseudo-spectral derivative of $u$, denoted by $D_N^{(m)}$, can be computed as \begin{eqnarray}
D_N^{(m)} = D_N^m = \underbrace{D_N \cdot \cdot \cdot D_N}_{m-times}.
\end{eqnarray} In particular,  \begin{eqnarray}
D_N^{(2)} = D_N^2,  & \hbox{and}  & D_N^{(4)} = D_N^4. 
\end{eqnarray} In the case $a=-1$ and $b=1$, computationally practical methods for deriving the entries of $D_N$ can be found, for instance, in  \cite{GHO} and in \cite{GT}, where explicit formulas are given.\par 
In next section, we will operate on both matrices $D_N^{(2)}$ and $D_N^{(4)}$ in order to generate new matrices in which each type of boundary condition is incorporated into both of them.  

\subsection{Second order differentiation matrices in one dimension}\label{s2do}
In this section, we build second order differentiation matrices enforcing either Dirichlet, Neumann or Robin boundary conditions.  

\subsubsection{Nonhomogeneous Dirichlet boundary conditions} 
Suppose that $u(x)$ satisfies the nonhomogeneous boundary conditions, $u(a)=\alpha$ and $u(b)=\beta$, where $\alpha, \beta \in \mathbb{R}$. Therefore, $u_N(x_{N+1})=u_N(a)=P_N(a)=u(a)=\alpha$ and  $u_N(x_1)=u_N(b)=P_N(b)=u(b)=\beta$. In this case, we approximate the second order derivatives of $u$ at the interior points $x_i$, $i=2,...,N$  as follows 
\begin{eqnarray}
\dfrac{d^2 u}{dx^2}(x_i) \, \approx \, \dfrac{d^2 u_N}{dx^2}(x_i) \, =  \, \beta \> (D_N^{(2)})_{i\>1}  + \sum_{j=2}^{N}(D_N^{(2)})_{i\>j} \> u(x_j) + \alpha \> (D_N^{(2)})_{i\>N+1}.
\end{eqnarray}


First, to describe our further results precisely some notation are required: the matrix ${\widehat{D}_D},$ 
  \begin{eqnarray}
 \left( {\widehat{D}_D}\right)_{i\>j} \, = \,(D_N^{(2)})_{i+1 \> j+1}, & &\quad 1 \leq i,j \leq N-1
  \end{eqnarray}   
   the vector $\overrightarrow{W_D},$  
\begin{eqnarray}
  (\overrightarrow{W_D})_i \,= \,\beta \> (D_N^{(2)})_{i+1\>1}  +  \alpha \> (D_N^{(2)})_{i+1\>N+1}, & & \quad 1 \leq i,j \leq N-1. 
\end{eqnarray}
and the affine transformation $ T_D : \mathbb{R}^{N-1} \rightarrow \mathbb{R}^{N-1}$ 
 \begin{eqnarray}
T_D (\vec{u}_{N-1} ) \, = \, {\widehat{D}_D} \> \vec{u}_{N-1} + \overrightarrow{W_D},
\end{eqnarray}
where $\vec{u}_{N-1}=(u(x_2),...,u(x_N))$, which discretizes the second order derivative on $(a,b)$ subjected to Dirichlet conditions.

\subsubsection{Nonhomogeneous Neumann boundary conditions}
Consecutively, if we  enforce $u'(a) = \alpha$ and $u'(b)=\beta$,  the values of $u_N(a)$ and $u_N(b)$ can be obtained from $\dfrac{du_N}{dx}(x_1)= \beta$, \, $\dfrac{du_N}{dx}(x_{N+1})= \alpha$ as follows: 
 \begin{eqnarray}\label{sistema1}
\left( \begin{array}{cllll}
(D_N)_{N+1 \> 1} & (D_N)_{N+1 \> N+1}\\
(D_N)_{1 \> 1} & (D_N)_{1 \> N+1}
\end{array}\right) \left( \begin{array}{cllll}
u(x_1) \\ u(x_{N+1}) \end{array}\right) & = & \left( \begin{array}{cllll}
\alpha \\ \beta \end{array}\right) - \sum_{j=2}^{N} \left( \begin{array}{cllll}
(D_N)_{N+1 \> j} \\ (D_N)_{1 \> j} \end{array}\right) \> u(x_j).
\end{eqnarray}
Furthermore,  if we introduce the notation \begin{eqnarray}
Q = \left( \begin{array}{cllll}
(D_N)_{N+1 \> 1} & (D_N)_{N+1 \> N+1}\\
(D_N)_{1 \> 1} & (D_N)_{1 \> N+1}
\end{array}\right), & H= \left( \begin{array}{cllll}
\alpha \\ \beta \end{array}\right), & 
\end{eqnarray} 
and
\begin{eqnarray} 
G_j =  \left( \begin{array}{cllll}
(D_N)_{N+1 \> j} \\ (D_N)_{1 \> j} \end{array}\right), &  j=2, \cdots, N, 
\end{eqnarray} 
the formula (\ref{sistema1}) can be rewritten as: \begin{equation}\label{FMsistema1}
Q \>  \left( \begin{array}{cllll}
u(x_1) \\ u(x_{N+1}) \end{array}\right) = H -  \sum_{j=2}^{N} G_j \> u(x_j). 
\end{equation} Therefore,  \begin{equation}\label{Newmnnrecover}
 \left( \begin{array}{cllll}
u(x_1) \\ u(x_{N+1}) \end{array}\right) = Q^{-1}\> H -  \sum_{j=2}^{N} Q^{-1}\>G_j \> u(x_j).
\end{equation} Particularly, we demonstrate the non singularity of the matrix $Q$  in following  proposition. \begin{proposition}
For each integer $N>1$, the matrix $Q$ is nonsingular.
\end{proposition}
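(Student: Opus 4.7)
The plan is to proceed by direct computation, using the explicit closed-form formulas for the four corner entries of the Chebyshev--Gauss--Lobatto differentiation matrix that are quoted in the excerpt just after (2.12) via the references \cite{GHO} and \cite{GT}. Because $Q$ is only $2\times 2$, reducing the nonsingularity question to a one-line determinant evaluation is the most efficient strategy, and the scaling factor $\frac{2}{b-a}$ appearing in (2.12) does not affect singularity, so one may work on the reference interval $[-1,1]$ and then multiply the determinant by $\left(\frac{2}{b-a}\right)^{2}$ at the end.

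First I would recall that the corner entries of the unscaled matrix (with $y_i = \cos((i-1)\pi/N)$) are
\begin{equation*}
\frac{d\widehat{L}_1}{dy}(y_1)=\frac{2N^2+1}{6},\quad \frac{d\widehat{L}_{N+1}}{dy}(y_{N+1})=-\frac{2N^2+1}{6},\quad \frac{d\widehat{L}_{N+1}}{dy}(y_1)=\frac{(-1)^N}{2},\quad \frac{d\widehat{L}_1}{dy}(y_{N+1})=-\frac{(-1)^N}{2}.
\end{equation*}
Multiplying each of these by $\frac{2}{b-a}$ gives the four entries $(D_N)_{1,1}$, $(D_N)_{N+1,N+1}$, $(D_N)_{1,N+1}$ and $(D_N)_{N+1,1}$ that appear in $Q$.

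Next I would substitute these into the definition of $Q$ and compute:
\begin{equation*}
\det Q = (D_N)_{N+1,1}(D_N)_{1,N+1} - (D_N)_{N+1,N+1}(D_N)_{1,1} = \left(\frac{2}{b-a}\right)^{2}\left[-\frac{1}{4}+\frac{(2N^2+1)^2}{36}\right].
\end{equation*}
Factoring the bracket as $\frac{(2N^2+1-3)(2N^2+1+3)}{36}=\frac{4(N^2-1)(N^2+2)}{36}$, I obtain
\begin{equation*}
\det Q \;=\; \frac{4(N^2-1)(N^2+2)}{9(b-a)^{2}},
\end{equation*}
which is strictly positive for every integer $N>1$, proving that $Q$ is nonsingular.

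There is no real obstacle: the argument reduces to a $2\times 2$ determinant once the standard corner formulas are quoted. The only minor care needed is tracking the uniform scaling factor $\frac{2}{b-a}$ (which cannot kill invertibility) and observing that the determinant vanishes precisely at the excluded value $N=1$, so the hypothesis $N>1$ is used in an essential way.
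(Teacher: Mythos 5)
Your proposal is correct and follows essentially the same route as the paper: compute $\det Q$ explicitly and observe that it equals $\frac{(2N^2+1)^2-9}{9(b-a)^2}$, which is nonzero for every integer $N>1$. You merely supply the intermediate details the paper omits — the corner-entry formulas of the CGL differentiation matrix and the factorization $(2N^2+1)^2-9=4(N^2-1)(N^2+2)$ that makes the positivity for $N>1$ transparent.
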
 \begin{proof} 
Note that  $$det(Q) = \dfrac{(2N^2+1)^2-9}{9(b-a)^2}.$$  Hence, $det(Q)\neq 0$ for each integer $N>1$.
\end{proof}

\medskip
\noindent  Consequently, using (\ref{Newmnnrecover}), the pseudo spectral approximation of the second derivative of $u$ at the interior points $x=x_i$ is given by 
 \begin{equation*}
\begin{split}
\dfrac{d^2u}{dx^2}(x_i) & \approx  \dfrac{d^2 u_N}{dx^2}(x_i) \, =     \sum_{j=2}^{N} \left( (D_N^{(2)})_{i\>j} -  (D_N^{(2)})_{i\>1} F_{1}^{Q^{-1}G_j} -  (D_N^{(2)})_{i\>N+1} F_{2}^{Q^{-1}G_j} \right) \> u(x_j) \\ &\quad + (D_N^{(2)})_{i\>1}  F_{1}^{Q^{-1}H} +  (D_N^{(2)})_{i\>N+1}   F_{2}^{Q^{-1}H}, \>\>\>\>\> i=2,...,N.
\end{split}
\end{equation*} Here, $F_i^A$ stands for the $i$-th row of the matrix $A$.
Therefore, we define the matrix $\widehat{D}_{Ne}$ and the vector  $\overrightarrow{W_{Ne}}$ as follows:
\begin{eqnarray}
(\widehat{D}_{Ne})_{i \> j}  =  (D_N^{(2)})_{i+1\>j+1} -  (D_N^{(2)})_{i+1\>1} F_{1}^{Q^{-1}G_{j+1}} -  (D_N^{(2)})_{i+1\>N+1} F_{2}^{Q^{-1}G_{j+1}}, \quad 1\leq i,j \leq N-1
\end{eqnarray}
and 
\begin{eqnarray}
(\overrightarrow{W_{Ne}})_i & = & F_{1}^{Q^{-1}H}  \> (D_N^{(2)})_{i+1 \> 1} +  F_{2}^{Q^{-1}H} \>  (D_N^{(2)})_{i+1 \> N+1}, \quad 1\leq i \leq N-1, 
\end{eqnarray}

\medskip 
\noindent which allows us to define the discretization of the second order derivative on $(a, b)$ subjected to  Neumann conditions through the affine transformation $T_{Ne} : \mathbb{R}^{N-1} \rightarrow \mathbb{R}^{N-1},$ 
\begin{eqnarray}
T_{Ne}(\vec{u}_{N-1}) & = & \widehat{D}_{Ne} \>\vec{u}_{N-1} + \overrightarrow{W_{Ne}},
\end{eqnarray}
being $\vec{u}_{N-1}=(u(x_2),...,u(x_N))$.

\subsubsection{Nonhomogeneous Robin boundary conditions} 
Hereunder, we suppose that \begin{equation}\label{robincond}
\left\{\begin{array}{clllll}
\alpha u(a) - \beta u'(a) = g_1\\
\alpha u(b) + \beta u'(b) = g_2,
\end{array}\right.
\end{equation}
\bigskip
\noindent where \,$\alpha, \beta , g_1, g_2 \in \mathbb{R}$ and $\alpha \beta > 0$. Therefore, the values $u_N(x_{N+1})=u_N(a)$ and $u_N(x_1)=u_N(b)$ satisfy
 \begin{eqnarray}\label{sistema2}
\left( \begin{array}{cllll}
-\beta (D_N)_{N+1 \> 1} & \alpha-\beta (D_N)_{N+1 \> N+1}\\
\alpha + \beta (D_N)_{1 \> 1} & \beta (D_N)_{1 \> N+1}
\end{array}\right) \left( \begin{array}{cllll}
u(x_1) \\ u(x_{N+1}) \end{array}\right) =  \left( \begin{array}{cllll}
g_1 \\ g_2 \end{array}\right) - \sum_{j=2}^{N} \beta \left( \begin{array}{cllll}
-(D_N)_{N+1 \> j} \\ (D_N)_{1 \> j} \end{array}\right) \> u(x_j).
\end{eqnarray} Thereupon, if we set \begin{eqnarray*}
Q_R =\left( \begin{array}{cllll}
-\beta (D_N)_{N+1 \> 1} & \alpha-\beta (D_N)_{N+1 \> N+1}\\
\alpha + \beta (D_N)_{1 \> 1} & \beta (D_N)_{1 \> N+1}
\end{array}\right) , & H_R= \left( \begin{array}{cllll}
g_1 \\ g_2 \end{array}\right), & 
\end{eqnarray*}
and 
\begin{eqnarray*}
G_{R,j} = \beta \left( \begin{array}{cllll}
-(D_N)_{N+1 \> j} \\ (D_N)_{1 \> j} \end{array}\right) \quad j=2,...,N,
\end{eqnarray*}
the formula (\ref{sistema2}) can be rewritten as \begin{equation}\label{FMsistema2}
Q_R \>  \left( \begin{array}{cllll}
u(x_1) \\ u(x_{N+1}) \end{array}\right) = H_R -  \sum_{j=2}^{N} G_{R,j} \> u(x_j).
\end{equation}


\noindent Thus,  \begin{equation}\label{Robinrecover}
 \left( \begin{array}{cllll}
u(x_1) \\ u(x_{N+1}) \end{array}\right) = Q_R^{-1}\> H_R -  \sum_{j=2}^{N} Q_R^{-1}\>G_{R,j} \> u(x_j).
\end{equation}
\bigskip
 The invertibility of the matrix $Q_R$ is guaranteed thanks to the following proposition. 
  \begin{proposition}
For each integer $N>1$, the matrix $Q_R$ is nonsingular.  
\end{proposition}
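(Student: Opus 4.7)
The plan is to mimic the proof of the preceding proposition for $Q$: obtain a closed-form expression for $\det(Q_R)$ in terms of $N$, $\alpha$, $\beta$ and $b-a$, and then verify that the assumption $\alpha\beta>0$ forces it to be nonzero. The ingredients are the classical explicit values of the four corner entries of the Chebyshev differentiation matrix at CGL nodes on $[-1,1]$, rescaled via \eqref{Matrizprimeraderivada} to $[a,b]$:
\begin{equation*}
(D_N)_{1,1}=\frac{2N^{2}+1}{3(b-a)},\quad (D_N)_{N+1,N+1}=-\frac{2N^{2}+1}{3(b-a)},\quad (D_N)_{1,N+1}=\frac{(-1)^{N}}{b-a},\quad (D_N)_{N+1,1}=-\frac{(-1)^{N}}{b-a}.
\end{equation*}

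First I would substitute these values into
\begin{equation*}
\det(Q_{R})=\bigl[-\beta(D_N)_{N+1,1}\bigr]\bigl[\beta(D_N)_{1,N+1}\bigr]-\bigl[\alpha-\beta(D_N)_{N+1,N+1}\bigr]\bigl[\alpha+\beta(D_N)_{1,1}\bigr].
\end{equation*}
The key structural observation is the antisymmetry $(D_N)_{N+1,N+1}=-(D_N)_{1,1}$, which makes the two $\alpha\beta$ cross-terms combine (rather than cancel), while $(-1)^{2N}=1$ controls the off-diagonal product. Setting $c:=(2N^{2}+1)/[3(b-a)]$, a short calculation should yield the perfect-square completion
\begin{equation*}
\det(Q_{R})=\frac{\beta^{2}}{(b-a)^{2}}-\bigl(\alpha+\beta c\bigr)^{2}.
\end{equation*}

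Next, I would factor this difference of squares as
\begin{equation*}
\det(Q_{R})=-\left(\alpha+\frac{2\beta(N^{2}-1)}{3(b-a)}\right)\left(\alpha+\frac{2\beta(N^{2}+2)}{3(b-a)}\right).
\end{equation*}
Finally, the hypothesis $\alpha\beta>0$ enters here. Since $N>1$ implies $N^{2}-1>0$ and $N^{2}+2>0$, and $b>a$, the two coefficients multiplying $\beta$ are strictly positive; hence each factor $\alpha+\beta p_{i}$ has the same strict sign as $\alpha$ (which is nonzero because $\alpha\beta>0$). Their product is therefore strictly positive, so $\det(Q_{R})<0$ and in particular $Q_{R}$ is nonsingular.

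The only real obstacle I anticipate is bookkeeping: correctly tracking the signs coming from $(D_N)_{N+1,1}$, the minus in the definition of $Q_R$, and the cofactor expansion, in order to arrive at the perfect-square form. Once that identity is in place, the factorization and the sign analysis under $\alpha\beta>0$ are immediate, and this is essentially the same routine as the determinant computation already used for $Q$ in the previous proposition.
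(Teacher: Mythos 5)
Your proposal is correct and follows essentially the same route as the paper: both compute the closed-form determinant from the corner entries of $D_N$, arriving at the identical expression $\det(Q_R)=\frac{\beta^2}{(b-a)^2}-\bigl(\alpha+\tfrac{\beta(2N^2+1)}{3(b-a)}\bigr)^2$ (the paper writes it as $\frac{9\beta^2-[3\alpha(b-a)+\beta(2N^2+1)]^2}{9(b-a)^2}$), and then invoke $\alpha\beta>0$. The only cosmetic difference is that you factor the difference of squares and check signs directly (even obtaining $\det(Q_R)<0$), while the paper argues by contradiction from the equality $3\alpha(b-a)+\beta(2N_0^2+1)=\pm 3|\beta|$.
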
 \begin{proof}(Proof by contradiction)\par
\noindent We observe that 
 \begin{eqnarray}\label{detrobin}
det(Q_R) & = & \dfrac{9\beta^2-[3\alpha(b-a)+\beta(2N^2+1)]^2}{9(b-a)^2}.
\end{eqnarray} Therefore, if $det(Q_R)=0$ for some integer $N_0>1$ yields  $$3\alpha (b-a) + \beta (2N_0^2+1)= \pm 3|\beta|.$$ Nevertheless,  the above equality does not hold because $\alpha \beta >0$. Thus $det(Q_R) \neq 0$ for each integer $N>1$.  
\end{proof}

As we have argued in the previous sections and using  (\ref{Robinrecover}), we can approximate the second derivative of $u$ at the interior points as:
 \begin{equation*}
\begin{split}
\dfrac{d^2u}{dx^2}(x_i) &\approx \dfrac{d^2u_N}{dx^2}(x_i) =  \sum_{j=2}^{N} \left( (D_N^{(2)})_{i\>j} -  (D_N^{(2)})_{i\>1} F_{1}^{Q_R^{-1}G_{R,j}} -  (D_N^{(2)})_{i\>N+1} F_{2}^{Q_R^{-1}G_{R,j}} \right) \> u(x_j)\\ &\quad +  (D_N^{(2)})_{i\>1}  F_{1}^{Q_R^{-1}H_R} +  (D_N^{(2)})_{i\>N+1}   F_{2}^{Q_R^{-1}H_R}, \quad \quad 2\leq i \leq N.
\end{split}
\end{equation*}  

\noindent Consequently, we define  the matrix $\widehat{D}_{R}$ and the vector $\overrightarrow{W_R}$  whose entries are: \begin{eqnarray}
(\widehat{D}_{R})_{i \> j}  =  (D_N^{(2)})_{i+1\>j+1} -  (D_N^{(2)})_{i+1\>1} F_{1}^{Q_R^{-1}G_{R,j+1}} -  (D_N^{(2)})_{i+1\>N+1} F_{2}^{Q_R^{-1}G_{R,j+1}}, \quad 1 \leq i,j \leq N-1 
\end{eqnarray} and \begin{eqnarray}
(\overrightarrow{W_R})_i& = & F_{1}^{Q_R^{-1}H_R}  \> (D_N^{(2)})_{i+1 \>  1} +  F_{2}^{Q_R^{-1}H_R} \>  (D_N^{(2)})_{i+1 \> N+1}, \quad 1 \leq i \leq N-1.
\end{eqnarray} 

Lastly, we discretize the second order derivative  on $(a, b)$ subjected to  Robin conditions through  the affine transformation $T_{R} : \mathbb{R}^{N-1} \rightarrow \mathbb{R}^{N-1}$ defined by \begin{equation}
T_{R} (\vec{u}_{N-1}) = \widehat{D}_{R} \> \vec{u}_{N-1} + \overrightarrow{W_R},
\end{equation} being $\vec{u}_{N-1}=(u(x_2),...,u(x_N))$. 


\subsection{Fourth order differentiation matrix in one dimension}\label{s4to}
In this section, the previous ideas are extended in order to discretize the biharmonic problem. In this case, we suppose that 
\begin{eqnarray}\label{fronteracuartoorden}
u(a)=\alpha_1, \; u(b)=\beta_1, \; u'(a)=\alpha_2 \; \hbox{ and } \; u'(b)=\beta_2.
\end{eqnarray}
Therefore, if we  assume that 
\begin{eqnarray}\label{fronteracuartoordenN}
u_N(a)=\alpha_1, \; u_N(b)=\beta_1, \; u'_N(a)=\alpha_2 \; \hbox{ and } \; u'_N(b)=\beta_2,  
\end{eqnarray}
we find that  
\begin{equation}\label{sistema3}
\begin{split}
\left( \begin{array}{cllll}
(D_N)_{N+1 \> 2} & (D_N)_{N+1 \> N}\\
(D_N)_{1 \> 2} & (D_N)_{1 \> N}
\end{array}\right) \left( \begin{array}{cllll}
u(x_2) \\ u(x_{N}) \end{array}\right)&  =  \left( \begin{array}{cllll}
\alpha_2 \\ \beta_2 \end{array}\right) - \beta_1  \left( \begin{array}{cllll}
(D_N)_{N+1 \> 1} \\ (D_N)_{1\>1} \end{array}\right) \\ & \quad  - \alpha_1  \left( \begin{array}{cllll}
(D_N)_{N+1 \>N+1} \\ (D_N)_{1\>N+1} \end{array}\right) 
  - \sum_{j=3}^{N-1} \left( \begin{array}{cllll}
(D_N)_{N+1 \> j} \\ (D_N)_{1 \> j} \end{array}\right) \> u(x_j).
\end{split}
\end{equation} 
Consequently, if we introduce the notation 
 \begin{eqnarray*}
Q_{BH} & = \left( \begin{array}{cllll}
(D_N)_{N+1 \> 2} & (D_N)_{N+1 \> N}\\
(D_N)_{1 \> 2} & (D_N)_{1 \> N}
\end{array}\right), 
\end{eqnarray*}
\begin{eqnarray*}
  G_{BH,j}&  =  \left( \begin{array}{cllll}
(D_N)_{N+1 \> j} \\ (D_N)_{1 \> j} \end{array}\right),  \quad j=3,...,N-1, \\
\end{eqnarray*} and
\begin{eqnarray*}
 H_{BH} & = & \left( \begin{array}{cllll}
\alpha_2 \\ \beta_2 \end{array}\right) - \beta_1  \left( \begin{array}{cllll}
(D_N)_{N+1 \> 1} \\ (D_N)_{1\>1} \end{array}\right)  - \alpha_1  \left( \begin{array}{cllll}
(D_N)_{N+1 \>N+1} \\ (D_N)_{1\>N+1}.\end{array}\right), 
\end{eqnarray*} 
we can rewrite the formula (\ref{sistema3}) as
 \begin{equation}\label{FMsistema3}
Q_{BH} \>  \left( \begin{array}{cllll}
u(x_2) \\ u(x_{N}) \end{array}\right) = H_{BH} -  \sum_{j=3}^{N-1} G_{BH,j} \> u(x_j),  
\end{equation} 
which implies that  \begin{equation}\label{BArecover}
 \left( \begin{array}{cllll}
u(x_2) \\ u(x_{N}) \end{array}\right) = Q_{BH}^{-1}\> H_{BH} -  \sum_{j=3}^{N-1} Q_{BH}^{-1}\>G_{BH,j} \> u(x_j).
\end{equation} 
Therefore, the values of $u_N(x_2)$ and $u_N(x_{N})$  are deduced from \eqref{BArecover}. 
\par
\bigskip 
The following result establishes the invertibility of the matrix $Q_{BH}$. 
\medskip
\begin{proposition}\label{invQBA}
For every integer $N>1$, the matrix $Q_{BH}$ is non singular. Moreover,\begin{eqnarray*} detQ_{BH} = o(N^5) & as & N \uparrow \infty. \end{eqnarray*} 
\end{proposition}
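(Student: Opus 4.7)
The plan is to compute $\det Q_{BH}$ directly using the well-known closed-form expressions for the entries of the Chebyshev differentiation matrix at the endpoints of the CGL grid. For $i\in\{1,N+1\}$ and $j\neq i$, one has
\[
(D_N)_{i,j}=\frac{2}{b-a}\cdot\frac{c_{i-1}}{c_{j-1}}\cdot\frac{(-1)^{(i-1)+(j-1)}}{y_i-y_j},
\]
with $c_0=c_N=2$ and $c_k=1$ otherwise. Substituting $y_1=1$, $y_{N+1}=-1$, $y_2=\cos(\pi/N)$ and $y_N=-\cos(\pi/N)$ yields an explicit rational expression in $\cos(\pi/N)$ for each of the four entries of $Q_{BH}$.

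With these entries in hand, the determinant follows from a short simplification. After factoring $\frac{4}{b-a}$ out of each row, the diagonal entries of the resulting scalar matrix carry the factor $(1+\cos(\pi/N))^{-1}$ with signs $(-1)^{N}$ and $(-1)^{N+1}$, while the off-diagonal entries carry $\pm(1-\cos(\pi/N))^{-1}$. Using $(1+c)^{2}-(1-c)^{2}=4c$ together with $1-\cos^{2}(\pi/N)=\sin^{2}(\pi/N)$, the determinant collapses to
\[
\det Q_{BH}\;=\;\frac{64\,\cos(\pi/N)}{(b-a)^{2}\,\sin^{4}(\pi/N)}.
\]
This is strictly positive whenever $\cos(\pi/N)>0$, i.e. for every $N\geq 3$, which establishes non-singularity. (The value $N=2$ is formally degenerate because the columns labelled $j=2$ and $j=N$ coincide, but the biharmonic construction requires at least one interior index $j\in\{3,\dots,N-1\}$ and therefore implicitly assumes $N\geq 4$.) For the asymptotic claim, the expansions $\sin(\pi/N)=\pi/N+O(N^{-3})$ and $\cos(\pi/N)=1+O(N^{-2})$ give
\[
\det Q_{BH}\;=\;\frac{64}{(b-a)^{2}\pi^{4}}\,N^{4}+O(N^{2}),
\]
which is certainly $o(N^{5})$ (indeed $\Theta(N^{4})$).

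The main obstacle is purely bookkeeping: keeping the signs $(-1)^{i+j}$ and the boundary weights $c_0=c_N=2$ correctly aligned so that the four rational entries combine into the neat quotient above. No machinery beyond the standard explicit representation of the Chebyshev differentiation matrix at extremal nodes is required, and the estimate $o(N^{5})$ is a wide margin on what the explicit formula actually yields.
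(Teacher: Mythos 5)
Your proposal is correct and arrives at exactly the same closed form $\det Q_{BH}=\tfrac{64\cos(\pi/N)}{(b-a)^2\sin^4(\pi/N)}$ that the paper's proof simply asserts; the difference is that you actually derive it from the endpoint entries of the Chebyshev differentiation matrix, and you also verify the $o(N^5)$ claim (indeed $\Theta(N^4)$), which the paper's proof passes over in silence. Your parenthetical about $N=2$ is a genuine catch rather than pedantry: at $N=2$ the columns $j=2$ and $j=N$ coincide, $\cos(\pi/2)=0$, and the determinant vanishes, so the proposition as literally stated ("for every integer $N>1$") fails there and should be read with the implicit restriction $N\geq 3$ (in practice $N\geq 4$, as you note, for the biharmonic construction to have any interior unknowns). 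In short, your argument is the paper's argument with the missing bookkeeping supplied.
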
 
\begin{proof} The determinant of the matrix $Q_{BH}$ gives  \begin{eqnarray*}
det(Q_{BH}) & = & \dfrac{64cos(\pi/N)}{(b-a)^2sin^4(\pi/N)}. 
\end{eqnarray*} It is clear that $det(Q_{BH})\neq 0$ for every integer $N>1$.
\end{proof}

On the other hand, we can obtain the discretization of the fourth derivative of $u$ at the interior points $x_i$  as   follows: 
 \begin{equation*}
\begin{split}
\dfrac{d^4u}{dx^4}(x_i) & \approx \dfrac{d^4u_N}{dx^4}(x_i)= \beta_1  (D_N^{(4)})_{i\>1} + \alpha_1 (D_N^{(4)})_{i\>N+1} +  (D_N^{(4)})_{i\>2} F_{1}^{Q_{BA}^{-1}H_{BA}} +  (D_N^{(4)})_{i\>N}  F_{2}^{Q_{BA}^{-1}H_{BA}}\\
& \quad + \sum_{j=3}^{N-1} \left( (D_N^{(4)})_{i\>j} -  (D_N^{(4)})_{i\>2} F_{1}^{Q_{BA}^{-1}G_{BA,j}} -  (D_N^{(4)})_{i\>N} F_{2}^{Q_{BA}^{-1}G_{BA,j}} \right) \> u(x_j), \>\>\>\>\> i=3,...,N-1.
\end{split}
\end{equation*} 

\noindent Thereupon, if we introduce the matrix $\widehat{D}_{BA}$ and the vector  $\overrightarrow{W_{BA}}$:
 \begin{equation*} 
\begin{split} 
(\widehat{D}_{BA})_{i \> j}  =  & (D_N^{(4)})_{i+2\>j+2} -  (D_N^{(4)})_{i+2\>2} F_{1}^{Q_{BA}^{-1}G_{BA,j+2}}  \\ & \quad -  (D_N^{(4)})_{i+2\>N} F_{2}^{Q_{BA}^{-1}G_{BA,j+2}}, \quad \quad \quad 1\leq i,\>j \leq N-3,
\end{split}
\end{equation*} 
 \begin{equation*}
\begin{split}
( \overrightarrow{W_{BA}})_{i}  = & \, \beta_1  (D_N^{(4)})_{i+2\>1} + \alpha_1 (D_N^{(4)})_{i+2\>N+1} +  (D_N^{(4)})_{i+2\>2} F_{1}^{Q_{BA}^{-1}H_{BA}}  \\ 
&  \quad  + (D_N^{(4)})_{i+2\>N}  F_{2}^{Q_{BA}^{-1}H_{BA}}, \quad \quad \quad i=1,...,N-3,  
\end{split}
\end{equation*} we can define the affine transformation  $T_{BA} : \mathbb{R}^{N-3} \rightarrow \mathbb{R}^{N-3}$ as follows: 
\begin{equation}
T_{BA} (\vec{u}_{N-3}) =\widehat{D}_{BA} \> \vec{u_{N-3}} + \overrightarrow{W_{BA}},
\end{equation}
being 
\begin{equation}
 \vec{u}_{N-3}=(u(x_3),...,u(x_{N-1})), 
\end{equation}
which discretizes  the fourth order derivative on $(a, b)$ subjected to  the boundary conditions \eqref{fronteracuartoorden}.


\subsection{General discrete formulation of one dimensional problems}\label{seccionpd} 
In this section, using the approach given in Section \ref{s2do}, we  will provide an unified general discretization of problem (\ref{problemaunidimensional}) for $\gamma=2$. Depending on the type of boundary condition, whether Dirichlet, Neumann or Robin, we write the discretization of (\ref{problemaunidimensional}) as
 \begin{eqnarray}\label{P_2o}
\widehat{D}_{\omega} \,\vec{u} + F(x_2,...,x_N,\vec{u}) + \overrightarrow{W_{\omega}} = 0,
\end{eqnarray} where $\vec{u}=(u(x_2),...,u(x_N))$ and the subscript $\omega \in \{D,Ne,R\}$. \par 
Similarly, using the approach given in Section \ref{s4to}, we discretize the problem  (\ref{problemaunidimensional}), for $\gamma=4$, as   
\begin{eqnarray}\label{p4o}
\widehat{D}_{BH} \, \overrightarrow{u_{BH}} + F(x_3,...,x_{N-1}, \overrightarrow{u_{BH}}) + \overrightarrow{W_{BH}} = 0,
\end{eqnarray} being $\overrightarrow{u_{BH}} = (u(x_3),...,u(x_{N-1}))$. \par

We observe that there are $N-1$ unknowns in the problem \eqref{P_2o}, while problem \eqref{p4o} has $N-3$ unknowns. Moreover, in case that the function $F$ in (\ref{problemaunidimensional}) is linear in the variable $u$, both linear systems   \eqref{P_2o} and \eqref{p4o} can be solved isolating the unknowns. Notwithstanding,  if $F$ is a non linear function in the variable $u$, the Newton method has to be used to approximate the value of the unknowns in  (\ref{P_2o}) and  in (\ref{p4o}), respectively.  Finally, the Table \ref{appsol} summarizes how to compute the coefficients $a_i$'s for different types of boundary conditions.

\begin{table} [h] 
\begin{center}
\begin{tabular}{|c|c|c|c|c|c|c|c|c|c|c|c|}
\hline

\multicolumn{3}{|c|}{$u(x) \approx u_N(x)= \sum \limits_ {i=1}^{N+1} a_i\> L_i(x) $} \\
\hline
 Boundary conditions &  Coefficients $a_i$ & Solution's boundary values \\
\hline
Nonhomogeneous Dirichlet     &  $(\beta,u(x_2),...,u(x_N),\alpha)$  & $u(x_1)=\beta$ and  $u(x_{N+1})=\alpha$\\
 $\gamma=2$   & & \\
\hline
Nonhomogeneous Neumann & $(u(x_1),u(x_2),...,u(x_N),u(x_{N+1}))$ & $u(x_1)$ and $u(x_{N+1})$ are \\
 $\gamma=2$& & computed through (\ref{Newmnnrecover}).\\
\hline
Nonhomogeneous Robin & $(u(x_1),u(x_2),...,u(x_N),u(x_{N+1}))$ & $u(x_1)$ and $u(x_{N+1})$ are \\
 $\gamma=2$& & computed through (\ref{Robinrecover}).\\
\hline
Nonhomogeneous Dirichlet & $(\beta_1,u(x_2),u(x_3),...,u(x_{N-1}),u(x_N),\alpha_1)$ & $u(x_1)=\beta_1$, $u(x_{N+1})=\alpha_1$, \\ 
biharmonic
equation & &  $u(x_2)$ and $u(x_{N})$ are\\
 $\gamma=4$ & & computed in (\ref{BArecover}).\\
\hline
\end{tabular}
\caption{ {\small  Here, $\vec{u}$ and $\vec{u}_{BH}$ are calculated solving repectively systems (\ref{P_2o}) and (\ref{p4o}).}}
\label{appsol}
\end{center}
\end{table}


\subsection{Solving nonhomogeneous one dimensional problems}
As an application of discretization, in (\ref{P_2o})  and (\ref{p4o}), four examples are solved: a nonhomogeneous Dirichlet boundary value problem, a non linear Neumann boundary value problem, a Robin boundary value problem and a fourth order boundary value problem.

\begin{example} Let \begin{equation} 
\left\{ \begin{array}{clllll} \label{ejnum1}
\dfrac{d^2u}{dx^2} & =  -\dfrac{12p(2x-1)}{(p+(2x-1)^2)^{5/2}},  & x \in (0,1)\\
u(0) & = 1 &\\
u(1)& = 0. &
\end{array}\right. 
\end{equation}
The exact solution of 
(\ref{ejnum1}) is  \begin{eqnarray}
u(x) = \dfrac{2x-1}{\sqrt{p+(2x-1)^2}} - 
\dfrac{(2\sqrt{p+1}+p+1)(2x-1)}{2(p+1)}.
\end{eqnarray} 
\par 
The left half of Figure \ref{ej1} shows a plot of $u_{500}(x)$ in the case $p=10^{-3}$, and the right half, shows a plot of the corresponding absolute error. The Table \ref{tabla1} collects the $L_2$ and $L_{\infty}$ errors from \cite{LGBM} and the error obtained by using the method proposed in this paper which is substantially smaller than  the corresponding ones for other known methods. 
\begin{figure} [h] 
\begin{center}
\begin{tabular}{lll}
\includegraphics[width=0.5\columnwidth]{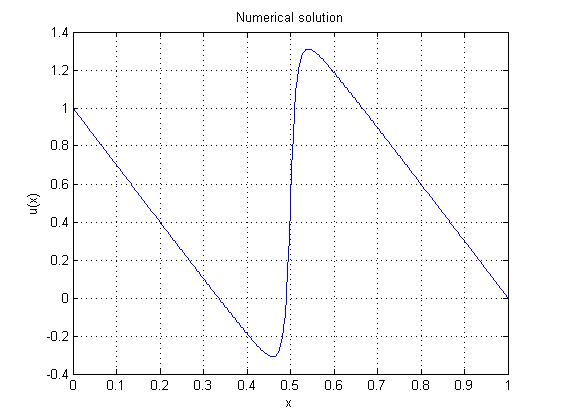} 
& \includegraphics*[width=0.5\columnwidth]{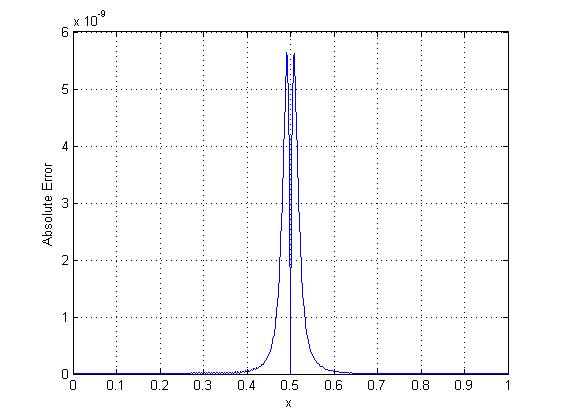}
\end{tabular}
\end{center}
\caption{ { \small (Left) The numerical solution $u_{500}(x)$ of the BVP (\ref{ejnum1}) for $p=10^{-3}$. (Right) The absolute error.}}
\label{ejemplodirichlet}
\label{ej1}
\end{figure}
\begin{table} [h] 
\begin{center}
\begin{tabular}{|c|c|c|c|c|c|c|c|c|c|c|c|}
\hline
 Norm &  Shooting & Finite   &  Finite & Discontinuous & One dimensional  \\
      &  method  & difference & Element & Galerkin & differentiation matrix\\
\hline
 $||\cdot ||_2$ & 1.76e-004 & 9.04e-006 & 1.75e-004 & 1.75e-004 & 1.66e-008 \\
\hline
 $||\cdot||_{\infty}$ & 2.14e-006 & 1.15e-003 & 1.43e-006 & 1.43e-006 & 5.64e-009
\\
\hline
\end{tabular}
\caption{ {\small Errors obtained by taking a grid of 501 collocation points (N=500). The numerical simulations using shooting, finite difference, finite element and discontinuous Galerkin method have been computed in \cite{LGBM}.  } }

\label{tabla1}
\end{center}
\end{table} 
\end{example}

\begin{example}
Let  \begin{equation}\label{n1dexp}
\left\{ \begin{array}{clllll}
\dfrac{d^2u}{dx^2} & = -e^{-2u} & x\in(0,1)\\
u_x(0) & =1 &\\
u_x(1) & =1/2, &
\end{array}\right.
\end{equation} The exact solution of 
(\ref{n1dexp}) is $u(x) = log(1+x)$. If we take $N=20$ and we solve the nonlinear system of equations derived from the discretization of (\ref{n1dexp}) via Newton Method with  a tolerance of $1e-08$, we aill obtain a  maximum error of   $6.9056e-14$. Thereby, the results obtained in \cite{PSP} have  been enhanced. 

\par  The left part of Figure \ref{ejemploneumann} shows the $u_{20}(x)$ and, the right part, shows the plot of the corresponding absolute error.

 \begin{figure}[h]
\begin{center}
\begin{tabular}{lll}
\includegraphics[width=0.5\columnwidth]{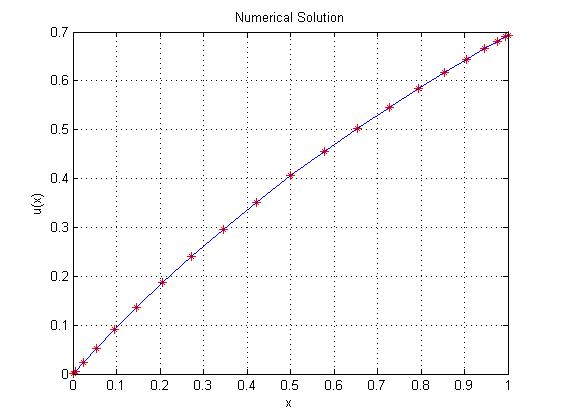} 
& \includegraphics*[width=0.5\columnwidth]{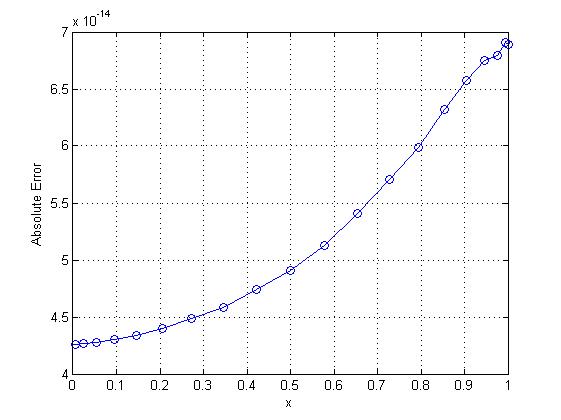}
\end{tabular}
\end{center}
\caption{ { \small (Left) Plot of the approximate $u_{20}(x)$ of the BVP  (\ref{n1dexp}). (Right) Plot of the absolute error.  }}
\label{ejemploneumann}
\end{figure}  
\end{example}

\begin{example} Let 
 \begin{equation}\label{n1dro}
\left\{ \begin{array}{clllll}
-e^x\>\dfrac{d^2u}{dx^2} & = 15 \,cos(4x)-8 \, sin(4x) & x\in(0,2\pi)\\
u(0)-u_x(0) & =2 &\\
u(2\pi)+u_x(2\pi) & =0.  &
\end{array}\right.
\end{equation}  The exact solution of 
(\ref{n1dro}) is $u(x) = cos(4x) \,e^{-x}$.  The left part of Figure \ref{ejemplorobin} shows the plot of  $u_{200}(x)$ while the right part shows the plot of the absolute. The maximum error obtained in this case is $1.6388e-12$.
\begin{figure}[h]
\begin{center}
\begin{tabular}{lll}
\includegraphics[width=0.5\columnwidth]{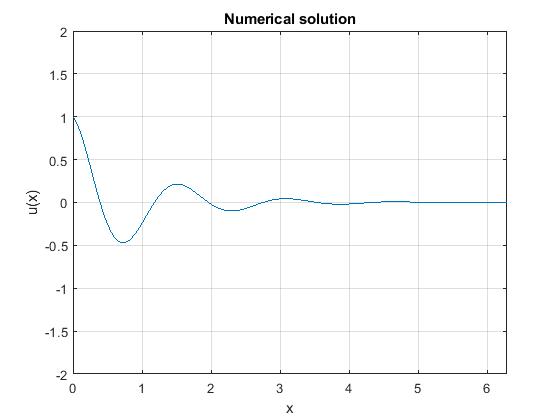} 
& \includegraphics*[width=0.5\columnwidth]{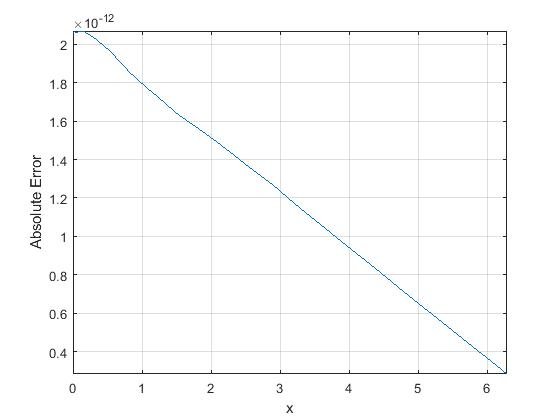}
\end{tabular}
\end{center}
\caption{ { \small (Left) Plot of $u_{200}(x)$ of the BVP (\ref{n1dro}). (Right) Plot of the absolute error. }}
\label{ejemplorobin}
\end{figure}
    
\end{example}
\begin{example}
Let  \begin{equation}\label{n1d4or}
\left\{ \begin{array}{clllll}
0.005 \>\dfrac{d^4u}{dx^4} - u & = 10 & x\in(-1,1)\\
u(-1)=u(1) & =0 &\\
u_x(-1)=u_x(1) & =0. &
\end{array}\right. 
\end{equation} The exact solution of \ref{n1d4or}  is $$u(x) = \dfrac{10 \,sinh(50) \,cos(50x)\,+\,10\, sin(50)\, cosh(50x)}{cosh(50)\, sin(50)+cos(50) \,sinh(50)}-10.$$ Finally, the left part of Figure \ref{ejemploBa} shows a plot of $u_{400}(x)$  for  $N=400$. Concurrently, the right part shows a plot of the absolute error. In this case, the maximum error obtained is $1.7163e-07$.
 
\begin{figure}[h]
\begin{center}
\begin{tabular}{lll}
\includegraphics[width=0.5\columnwidth]{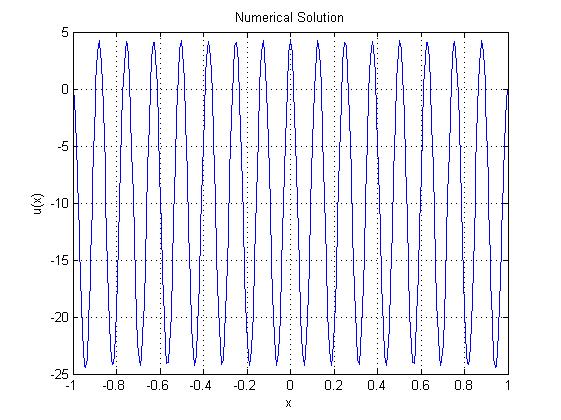} 
& \includegraphics*[width=0.5\columnwidth]{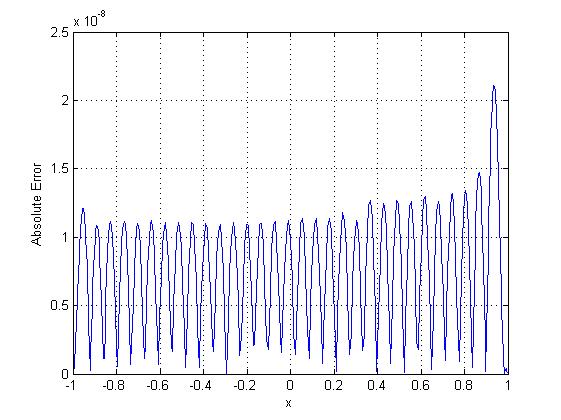}
\end{tabular}
\end{center}
\caption{ { \small (Left) Plot of  $u_{400}(x)$ of the BVP  (\ref{n1d4or}). (Right) Plot of the absolute error.}}
\label{ejemploBa}
\end{figure}  
\end{example}


\section{Polar differentiation matrices}\label{2dimensiones}

In this section, the polar differentiation matrices are defined, for the first time in the literature,  giving a substantial leap with the target of incorporating all type of boundary conditions in the differentiation matrices.  To begin with, we introduce  $u : B_R(0) \rightarrow \mathbb{R}$, which satisfies both \begin{eqnarray}\label{problemabidimensional}
\Delta^{\gamma}u + F(x,y,u) = 0, & & (x,y)\in B_R(0) 
\end{eqnarray}
and certain general boundary conditions where $F$ may be a linear or non linear function. It is worth noting that if $\gamma=1$, we deal  with the Laplacian  \begin{eqnarray}
\Delta(\cdot) = \dfrac{\partial^2}{\partial x^2}(\cdot) + \dfrac{\partial^2}{\partial y^2}(\cdot),
\end{eqnarray} while if $\gamma=2$, we work with  the biharmonic operator  \begin{eqnarray}
\Delta^2(\cdot) = \dfrac{\partial^4}{\partial x^4}(\cdot) + 2 \dfrac{\partial^4}{\partial x^2 \partial y^2}(\cdot) + \dfrac{\partial^4}{\partial y^4}(\cdot).
\end{eqnarray} \par 
Therefore, instead of solving (\ref{problemabidimensional}) in $(x,y)$-space, we consider the following change of variables  \begin{eqnarray}
x=r\>cos(\theta), & & y=r\>sin(\theta).
\end{eqnarray}  Thus, if we see the problem (\ref{problemabidimensional}) in terms of $r$ and $\theta$, we  can rewrite it as: \begin{eqnarray}\label{problemapolar}
\Delta^{\gamma}_{(r,\theta)} u + F(r,\theta,u) = 0 & & (r,\theta) \in [0,R)\times (0,2\pi], 
\end{eqnarray} where the Laplace and the biharmonic operators are respectively: \begin{equation*}
\begin{split}
\Delta_{(r,\theta)} & \coloneqq \dfrac{\partial^2 }{\partial r^2} + \dfrac{1}{r}  \dfrac{\partial }{\partial r} + \dfrac{1}{r^2}  \dfrac{\partial^2 }{\partial \theta^2} \\
\\
\Delta^2_{(r,\theta)} & \coloneqq \dfrac{\partial^4 }{\partial r^4} +  \dfrac{2}{r^2}  \dfrac{\partial^4 }{\partial r^2 \partial \theta^2} +  \dfrac{1}{r^4}  \dfrac{\partial^4 }{\partial \theta^4} +  \dfrac{2}{r}  \dfrac{\partial^3 }{\partial r^3} -  \dfrac{2}{r^3}  \dfrac{\partial^3 }{\partial r \partial \theta^2} -  \dfrac{1}{r^2}  \dfrac{\partial^2 }{\partial r^2} +  \dfrac{4}{r^4}  \dfrac{\partial^2 }{\partial \theta^2} +  \dfrac{1}{r^3}  \dfrac{\partial }{\partial r}.
\end{split}
\end{equation*} 
We note that, to avoid dividing by zero in $\Delta_{(r,\theta)}$ and $\Delta^2_{(r,\theta)}$, we take $N_r+1$ as the number of discretization points  in the $r$-direction, being $N_r$ odd. Moreover, in order to use the symmetry properties in $\theta$, we choose $N_{\theta}$ to be even. \par 
Therefor, we define  \begin{equation}\label{uappr}
u_{\frac{N_r+1}{2},N_{\theta}}(r,\theta) = \sum_{k=1}^{N_r +1} L_k(r) \> P_k(\theta), 
\end{equation}
where $L_k$'s  are the corresponding Lagrange polynomials associated to the nodes \begin{eqnarray}\label{malladoenr}
r_i = R\> y_i=R \,cos \left( \dfrac{(i-1) \> \pi}{N_r}\right), & & 1\leq i \leq N_r+1,
\end{eqnarray} where $\{y_1,...,y_{N_r+1}\}$ are the CGL points, and  \begin{equation}
P_k(\theta)=\sum_{l=1}^{N_{\theta}}a_{k,l} S_{N_{\theta}}(\theta-\theta_l) 
\end{equation} is the  trigonometric interpolants of $u(r_k,\theta)$ at the points $\theta_l$   \begin{eqnarray}\label{malladoentheta}
\theta_l = \dfrac{2\pi \> l }{N_{\theta}}, & & 1\leq l \leq N_{\theta},
\end{eqnarray} 

\noindent and        
 $ S_{N_{\theta}}(\theta)=D_e(N_{\theta}/2, \theta)$ is the   Dirichlet kernel. Thereupon, due to the smoothness of $u(r_k,\theta)$,  Theorem 1.4.2 and Corollary 1.4.7 in \cite{Stenger},   and   
 $h_\theta = \frac {2 \pi}{N_\theta},$ 
 
 \begin{equation}
S_{N_{\theta}}(\theta) = \dfrac{sin\left( \dfrac{N_{\theta} \theta}{2}\right)}{N_{\theta} tan\left( \dfrac{\theta}{2}\right)}.
\end{equation} 
We observe that; from \eqref{uappr} we obtain  $a_{k,l}=u(r_k,\theta_l)$ for all  $1\leq k \leq N_r+1$ and $1 \leq l \leq N_{\theta}$. \par 
On the other hand,  \begin{equation}
u (r_{N_r+2-i},\theta_j)= u(r_i,\theta_{j+\frac{N_\theta}{2}}), \quad 1 \leq j \leq \frac {N_{\theta}}{2}, \quad 1 \leq i \leq \frac{N_r+1}{2}.  
\end{equation} 
Therefore, 
\begin{equation} \label{relacioncoef}
 a_{N_r+2-i,j}= a_{i,j+\frac{N_\theta}{2}}, \quad 1 \leq j \leq \frac {N_{\theta}}{2}, \quad 1 \leq i \leq \frac{N_r+1}{2} .
 \end{equation} 
Hence, we can rewrite  \eqref{uappr} as follows: \begin{equation}
u_{\frac{N_r+1}{2},N_{\theta}}(r,\theta) = \sum_{k=1}^{N_r +1} L_k(r) \> \frac{1}{2\pi} \sum_{\overline{k}=-N_{\theta}/2}^{N_{\theta}/2 \>\>\> \prime}
\hat{a_{k\overline{k}}} \> e^{i\overline{k}\theta} , 
\end{equation} 
where  \begin{equation}
\hat{a_{k\overline{k}}} = \frac{1}{N_{\theta}} \sum_{l=1}^{N_{\theta}} a_{k,l} \> e^{-i\overline{k}\theta_l}. 
\end{equation}
Furthermore,  the prime indicates that the terms $k=\pm N_{\theta}/2$ are multiplied by $1/2$.    
Henceforth, we approximate the solution of (\ref{problemapolar}) by the following sum of finite series \begin{eqnarray}\label{APPSOL2D}
u(r,\theta) \approx u_{\frac{N_r+1}{2},N_{\theta}} (r,\theta) = P_{\frac{N_r+1}{2},N_{\theta}} (r,\theta)\coloneqq  \sum_{k=1}^{N_r+1} \sum_{l=1}^{N_{\theta}} a_{k,l}\, S_{N_{\theta}}(\theta - \theta_l)\, L_{k}(r). 
\end{eqnarray}  
Finally, if we  use  (\ref{relacioncoef}), we can rewrite   $u_{\frac{N_r+1}{2},N_{\theta}}(r,\theta)$  as: \begin{eqnarray} \label{numeincog}
u_{\frac{N_r+1}{2},N_{\theta}}(r,\theta) & = & \sum_{k=1}^\frac{N_r+1}{2} \sum_{l=1}^{N_{\theta}} a_{k,l} \left[ S_{N_{\theta}}(\theta-\theta_l) L_k(r) + S_{N_{\theta}}(\theta-\theta_{l+\frac{N_\theta}{2}}) L_{N_r+2-k}(r) \right].
\end{eqnarray}
 \noindent From the above formula it is deduced that  there are actually $({\frac{N_r+1}{2})N_{\theta}}$ number of unknowns in $u_{\frac{N_r+1}{2},N_{\theta}} (r,\theta)$. 


\subsection{Polar differentiation matrices of the Laplace operator}

 \par In this section, we build the differentiation matrix $D_{\Delta_{(r,\theta)}}$, which discretizes $\Delta_{(r,\theta)}$ in the disk of radius $R$. In order to do this, we  consider the matrix \begin{eqnarray}
(D_{N_r}^{(m)})_{i\>j} = \left(\dfrac{1}{R^m}\right) \dfrac{d^m \widehat{L_j}}{dy^m}(y_i),  & & 1 \leq i,j \leq N_r+1, \>\>\> m=1,2,3,4,... 
\end{eqnarray} and its submatrices defined as  \begin{eqnarray}
(D_1^{(m)})_{i \> j} = (D_{N_r}^{(m)})_{i \> j},  \quad (D_2^{(m)})_{i \> j} = (D_{N_r}^{(m)})_{\frac{N_r+1}{2}+i \>\> N_r+2-j} , & & 1 \leq i,j \leq  \frac{N_r+1}{2}.   
\end{eqnarray} Furthermore, $D_{N_{\theta}}^{(m)}$ denotes the matrix whose corresponding entries are  
 \begin{eqnarray}
(D_{N_{\theta}}^{(m)})_{k\>l} = \dfrac{d^mS_{N_{\theta}}}{d\theta^m}(\theta_k-\theta_l), \quad 1 \leq k,l \leq N_{\theta}. 
\end{eqnarray} 
We observe that $D_{N_{\theta}}^{(m)}$, unlike $D_{N_r}^{(m)}$, cannot be obtained  multiplying $m$-times $D_{N_{\theta}}^{(1)}$.\par 
The next calculations make strong use of (\ref{numeincog}), 
\begin{equation}
\left. \frac{\partial^2 u_{\frac{N_r+1}{2},N_{\theta}}}{\partial \theta^2}\right|_{(r_i,\theta_j)} = \sum_{k=1}^\frac{N_r+1}{2} \sum_{l=1}^{N_{\theta}} a_{k,l} \left( S''_{N_{\theta}}(\theta_j-\theta_l) L_k(r_i) + S''_{N_{\theta}}(\theta_j-\theta_{l+\frac{N_\theta}{2}}) L_{N_r+2-k}(r_i) \right)
=  \sum_{l=1}^{N_{\theta}} a_{i,l}  S''_{N_{\theta}}(\theta_j-\theta_l), 
\end{equation}
\begin{eqnarray*} 
\left. \frac{\partial u_{\frac{N_r+1}{2},N_{\theta}}}{\partial r}\right|_{(r_i,\theta_j)} & = & \left(\dfrac{1}{R}\right) \sum_{k=1}^\frac{N_r+1}{2} \sum_{l=1}^{N_{\theta}} a_{k,l} \left( S_{N_{\theta}}(\theta_j-\theta_l) ) \dfrac{d \widehat{L_k}}{dy}(y_i) + S_{N_{\theta}}(\theta_j-\theta_{l+\frac{N_\theta}{2}})\dfrac{d \widehat{L_{N_r+2-k}}}{dy}(y_i) \right)  \\& &   \\
 & = & \left(\dfrac{1}{R}\right) \sum_{k=1}^\frac{N_r+1}{2} \left( a_{k,j}  \dfrac{d \widehat{L_k}}{dy}(y_i) +  a_{k,j+\frac{N_\theta}{2}} \dfrac{d \widehat{L_{N_r+2-k}}}{dy}(y_i) \right) 
\end{eqnarray*} 
and
 \begin{eqnarray*} 
\left. \frac{\partial^2 u_{\frac{N_r+1}{2},N_{\theta}}}{\partial r^2}\right|_{(r_i,\theta_j)} & = &\left(\dfrac{1}{R^2}\right) \sum_{k=1}^\frac{N_r+1}{2} \sum_{l=1}^{N_{\theta}} a_{k,l} \left[ S_{N_{\theta}}(\theta_j-\theta_l) ) \dfrac{d^2 \widehat{L_k}}{dy^2}(y_i) + S_{N_{\theta}}(\theta_j-\theta_{l+\frac{N_\theta}{2}})\dfrac{d^2 \widehat{L_{N_r+2-k}}}{dy^2}(y_i) \right]. 
\\ & & \\
& = & \left(\dfrac{1}{R^2}\right) \sum_{k=1}^\frac{N_r+1}{2} \left( a_{k,j}  \dfrac{d^2 \widehat{L_k}}{dy^2}(y_i) +  a_{k,j+\frac{N_\theta}{2}} \dfrac{d^2 \widehat{L_{N_r+2-k}}}{dy^2}(y_i) \right).
\end{eqnarray*} 
Finally, taking all the above into account, we define the matrix  $D_{\Delta_{(r,\theta)}}$ as follows:  \begin{equation}\label{laplacedisco}
D_{\Delta_{(r,\theta)}}=(D_{1}^{(2)} + H \> D_{1}^{(1)}) \otimes  \left( \begin{array}{cllll} I & 0 \\ 0 & I \end{array} \right) + (D_{2}^{(2)} + H \> D_{2}^{(1)}) \otimes  \left( \begin{array}{cllll} 0 & I \\ I & 0 \end{array} \right) + H^{2} \otimes D_{N_{\theta}}^{(2)},
\end{equation} 
where I stands for the identity of order $\frac{N_\theta}{2} \times \frac{N_\theta}{2}$ and $H$ is the $ \frac{N_r+1}{2} \times  \frac{N_r+1}{2}$ diagonal matrix $H_{i\>i} = r^{-1}_{i}$ for $i=1,..., \frac{N_r+1}{2}$.\par 

From now on, we use the following notation: \begin{equation} 
\vec{u}^\bold{*}= \sum_{i=1}^\frac{N_r+1}{2} \sum_{j=1}^{N_{\theta}} \, u_{\frac{N_r+1}{2},N_{\theta}} (r_i,\theta_j) \,\, \vec{e}_{N_{\theta}(i-1)+j},  
\end{equation}
where $\vec{e}_1, \vec{e}_2, \cdots,$ 
$\vec{e}_{(\frac{N_r+1}{2})N_{\theta}}$
are the $(\frac{N_r+1}{2})N_{\theta}$ elements of the usual basis of
$\mathbb{R}^{(\frac{N_r+1}{2})N_{\theta}}$.
\par 
Henceforth, if we distinguish the values of $u_{\frac{N_r+1}{2},N_{\theta}}$ in the interior of the disk and we reject  the grid points of the boundary, $  u_{\frac{N_r+1}{2},N_{\theta}}(r_1,\theta_j), \,  1 \leq j \leq  N_{\theta}$, we yield 
  
\begin{equation} 
\vec{u}= \sum_{i=2}^\frac{N_r+1}{2} \sum_{j=1}^{N_{\theta}} \,u_{\frac{N_r+1}{2},N_{\theta}}(r_i,\theta_j)  \,\, \vec{e}_{N_{\theta}(i-2)+j}.
\end{equation}
\par 
We will build in the following subsections the corresponding differentiation matrices of the polar  Laplace operator enforcing, respectively, Dirichlet, Neumann and Robin boundary conditions.  


\subsubsection{Nonhomogeneous Dirichlet boundary conditions} 

Suppose that $u(R,\theta)=f(\theta)$ for $\theta \in (0,2\pi]$, being $f$ a continuous function on $[0,2\pi]$, so as the Dirichlet kernel properties are satisfied. Nonetheless, this condition can be weakened in order to solve the problems arising from applications. Furthermore, the corresponding numerical solution converges. Therefore, if we set \begin{eqnarray}
\vec{f}=  \sum_{j=1}^{N_{\theta}} \, f(\theta_j) \, \vec{e}_{j} & \mbox{and}& \vec{u}^1=  \sum_{j=1}^{N_{\theta}} \, u_{\frac{N_r+1}{2},N_{\theta}}(r_1,\theta_j) \, \, \vec{e}_{j}  
\end{eqnarray}
the boundary condition implies that $\vec{u}^1=\vec{f}$ and  $\Delta_{(r,\theta)}$ evaluated at the interior collocation points $(r_i,\theta_j)$,  for all $i=2,..., \frac{N_r+1}{2}$ and $j=1,...,N_{\theta}$,  can be approximated by  \begin{eqnarray}
\Delta_{(r,\theta)} \; u \approx D_{\Delta_{(r,\theta)}} \vec{u}^* = D^1_{\Delta_{(r,\theta)}} \vec{f}   + D^2_{\Delta_{(r,\theta)}} \vec{u},& & 
\end{eqnarray} where  \begin{eqnarray}
(D^1_{\Delta_{(r,\theta)}})_{i\>j} = (D_{\Delta_{(r,\theta)}})_{N_{\theta}+i \> j}, &  1\leq i \leq  (\frac{N_r-1}{2})N_{\theta}, & 1 \leq j \leq N_{\theta}    
\end{eqnarray} 
and
\begin{eqnarray}
(D^2_{\Delta_{(r,\theta)}})_{i\>j} = (D_{\Delta_{(r,\theta)}})_{N_{\theta}+i \> N_{\theta}+j}, & & 1\leq i,j \leq  (\frac{N_r-1}{2})N_{\theta}.   
\end{eqnarray} 
To finish this section, we define the discretization of $\Delta_{(r,\theta)}$ on $[0,R)\times(0,2\pi]$, subjected to nonhomogeneous Dirichlet boundary conditions through the affine map  \mbox{$T^D_{\Delta_{(r,\theta)}} : \mathbb{R}^{ (\frac{N_r-1}{2})N_{\theta}} \rightarrow \mathbb{R}^{ (\frac{N_r-1}{2})N_{\theta}}$}  which is given by  \begin{eqnarray} \label{laplace1}
T^D_{\Delta_{(r,\theta)}}(\vec{u}) = D^D_{\Delta_{(r,\theta)}} \> \vec{u} + \overrightarrow{W^D_{\Delta_{(r,\theta)}}},
\end{eqnarray} being $ D^D_{\Delta_{(r,\theta)}} =  D^2_{\Delta_{(r,\theta)}}$ and $\overrightarrow{W^D_{\Delta_{(r,\theta)}}} = D^1_{\Delta_{(r,\theta)}} \vec{f}$.


\subsubsection{Nonhomogeneous Neumann boundary conditions}
Now, we suppose that $\frac{\partial u}{\partial r} (R,\theta)= g(\theta)$, for $\theta \in (0,2\pi]$, being $g$ a continuous function on $[0,2\pi]$. Therefore,  
\begin{equation}
\left. \frac{\partial u_{\frac{N_r+1}{2},N_{\theta}}}{\partial r}\right|_{(r_1,\theta_j)}= g(\theta_j), \quad 1 \leq j \leq N_{\theta}. 
\end{equation}
In this case, we must consider the matrix that discretizes $\dfrac{\partial}{\partial r}(\dot)$ on $[0,R]\times (0,2\pi]$:  \begin{equation}\label{matrizp}
P =  D_1^{(1)} \otimes \left( \begin{array}{cllll}
I & 0 \\ 0 & I
\end{array}\right) + D_2^{(1)}  \otimes \left( \begin{array}{cllll}
0 & I \\ I & 0
\end{array}\right),
\end{equation} where  $I$ stands for the $\frac{N_\theta}{2} \times \frac{N_\theta}{2}$ identity matrix. If we highlight  the elements of the matrix $P$ corresponding to $r_1=R$, it  yields to the following matrices:\begin{eqnarray*}
(P_1)_{i\> j}=(P)_{i \> j}, &  &(P_2)_{i \>k}=(P)_{i\> N_{\theta}+k},  
\end{eqnarray*} for   $1 \leq i,j \leq N_{\theta}$ and  $1 \leq k \leq  (\frac{N_r-1}{2})N_{\theta}$. 
Therefore, denoting   
$\vec{g}=  \sum_{j=1}^{N_{\theta}} \, g(\theta_j) \, \vec{e}_{j},$ the nonhomogeneous Neumann boundary conditions implies that
\begin{equation}
\label{CC}
 \vec{g}=  P_1 \vec{u}^1 + P_2 \vec{u}. 
\end{equation}  Finally,  we obtain \begin{equation}\label{CCC}
 \vec{u}^1 = P_1^{-1} \left[\vec{g} - P_2 \>  \vec{u} \right].
\end{equation} The following proposition guarantees the invertibility of the matrix $P_1$.

\begin{proposition}\label{invP1}
For each integer $N_{\theta} \geq 2$ even and each integer $N_r > 1 $, the matrix $P_1$ is nonsingular. 
\end{proposition}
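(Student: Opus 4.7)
The plan is to reduce the nonsingularity question to a scalar inequality involving two entries of the underlying Chebyshev--Gauss--Lobatto derivative matrix $D_{N_r}^{(1)}$. The key observation is that, because the $N_\theta\times N_\theta$ factors in the Kronecker products defining $P$ in \eqref{matrizp} are the identity $\mathrm{I}_{N_\theta}$ and the block-swap $K:=\bigl(\begin{smallmatrix} 0 & \mathrm{I}_{N_\theta/2}\\ \mathrm{I}_{N_\theta/2} & 0\end{smallmatrix}\bigr)$, extracting the first $N_\theta$ rows and columns of $P$ (that is, the $(1,1)$ block under the Kronecker decomposition) yields
\begin{equation*}
P_1\;=\;a\,\mathrm{I}_{N_\theta}+b\,K,
\end{equation*}
with scalars $a:=(D_1^{(1)})_{1,1}=(D_{N_r}^{(1)})_{1,1}$ and $b:=(D_2^{(1)})_{1,1}=(D_{N_r}^{(1)})_{(N_r+3)/2,\,N_r+1}$ read directly from the definitions of the submatrices $D_1^{(1)}$ and $D_2^{(1)}$.

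Computing $\det(P_1)$ is then immediate. Since $K^{2}=\mathrm{I}_{N_\theta}$, the spectrum of $K$ is $\{+1,-1\}$ with each eigenvalue of multiplicity $N_\theta/2$, and the orthogonal diagonalization furnished by $U=\tfrac{1}{\sqrt{2}}\bigl(\begin{smallmatrix} \mathrm{I} & \mathrm{I}\\ \mathrm{I} & -\mathrm{I}\end{smallmatrix}\bigr)$ gives the spectrum of $P_1$ as $\{a+b,\,a-b\}$, again with multiplicities $N_\theta/2$. Hence
\begin{equation*}
\det(P_1)\;=\;(a+b)^{N_\theta/2}(a-b)^{N_\theta/2}\;=\;(a^{2}-b^{2})^{N_\theta/2},
\end{equation*}
and it suffices to verify $a^{2}\neq b^{2}$ for every admissible pair $(N_r,N_\theta)$.

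The final step is to evaluate $a$ and $|b|$ via the standard CGL formulas, divided by $R$ because $r_i=Ry_i$. The corner identity produces $a=(2N_r^{2}+1)/(6R)$, while the off-diagonal formula, together with the trigonometric identity $\cos(\pi(N_r+1)/(2N_r))=-\sin(\pi/(2N_r))$ and the endpoint weight $c_{N_r}=2$, gives $|b|=1/\bigl[2R(1-\sin(\pi/(2N_r)))\bigr]$. Since $N_r$ is odd and $N_r>1$, one has $N_r\ge 3$, so $\sin(\pi/(2N_r))\le 1/2$ and therefore $|b|\le 1/R$; meanwhile $a\ge 19/(6R)>1/R$, whence $a>|b|$ and $\det(P_1)>0$. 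I expect the main obstacle to be the sign and index bookkeeping in the off-diagonal entry for $b$: pinpointing the abscissa $y_{(N_r+3)/2}=-\sin(\pi/(2N_r))$ (which exists precisely because $N_r$ is odd), handling the alternating factor $(-1)^{(N_r+1)/2+N_r}$ from the CGL formula, and remembering the endpoint weight $c_{N_r}=2$. Once these are in place, the uniform estimate $a>|b|$ is elementary since $a$ grows quadratically in $N_r$ while $|b|$ stays bounded by $1/R$.
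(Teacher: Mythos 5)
Your argument is correct and structurally identical to the paper's: both reduce $P_1$ to the block form $\bigl(\begin{smallmatrix} a\,I & b\,I\\ b\,I & a\,I\end{smallmatrix}\bigr)$ and evaluate $\det(P_1)=(a^2-b^2)^{N_\theta/2}$ (the paper via a Schur complement, you via the eigendecomposition of the swap $K$ --- an immaterial difference). Where you diverge is in the value of $b$. You take the definition $(D_2^{(m)})_{i\,j}=(D_{N_r}^{(m)})_{\frac{N_r+1}{2}+i,\;N_r+2-j}$ at face value and obtain $b=(D_{N_r}^{(1)})_{\frac{N_r+3}{2},\,N_r+1}$, of magnitude $1/\bigl[2R\,(1-\sin(\pi/(2N_r)))\bigr]$; the paper instead identifies the off-diagonal block as $(D_{N_r}^{(1)})_{1,\,N_r+1}\,I$, i.e.\ $b=\pm 1/(2R)$. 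The paper's reading is the one consistent with its own displayed formula for $\partial u_{\frac{N_r+1}{2},N_\theta}/\partial r$ at $(r_i,\theta_j)$, whose ``swap'' coefficient is $(D_{N_r}^{(1)})_{i,\,N_r+2-k}$ with row index $i$ rather than $\frac{N_r+1}{2}+i$; the row shift in the stated definition of $D_2^{(m)}$ appears to be a slip, and you have inherited it. With the paper's $b$ one gets the closed form $\det(P_1)=\bigl(((2N_r^2+1)^2-9)/(36R^2)\bigr)^{N_\theta/2}$, nonzero for $N_r>1$ by inspection; with your $b$ no such closed form is available, and you correctly compensate with the uniform estimate $a=(2N_r^2+1)/(6R)>1/R\ge|b|$, using $\sin(\pi/(2N_r))\le 1/2$ for odd $N_r\ge 3$. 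Both routes prove the proposition; yours is robust to the indexing ambiguity but needs the extra inequality, and --- like the whole construction --- tacitly uses that $N_r$ is odd so that the index $\frac{N_r+3}{2}$ and the abscissa $y_{\frac{N_r+3}{2}}=-\sin(\pi/(2N_r))$ make sense, whereas the paper's exact determinant excludes only $N_r=1$ at a glance.
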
\begin{proof}
Note that the matrix $P_1$ has the following form: $$\left(\begin{array}{c|ccc}
(D_{N_r}^{(1)})_{1\>1}\> I & (D_{N_r}^{(1)})_{1\>N_r+1}\> I \\
\hline
 (D_{N_r}^{(1)})_{1\>N_r+1}\> I & (D_{N_r}^{(1)})_{1\>1}\> I
\end{array}\right),$$ where $I$ denotes the identity matrix of order $\frac{N_\theta}{2} \times \frac{N_\theta}{2}$. As the matrix $(D_{N_r}^{(1)})_{1\>1}\> I$ is non singular we obtain  
\begin{eqnarray*}
det(P_1) & = & det((D_{N_r}^{(1)})_{1\>1}\> I)\>det((D_{N_r}^{(1)})_{1\>1}\> I - ((D_{N_r}^{(1)})_{1\>N_r+1}\> I)((D_{N_r}^{(1)})_{1\>1}\> I)^{-1}((D_{N_r}^{(1)})_{1\>N_r+1}\> I)) \\ 
& = & \left[\left((D_{N_r}^{(1)})_{1\>1}\right)^2 - \left((D_{N_r}^{(1)})_{1\>N_r+1}\right)^2\right]^{\frac{N_\theta}{2}} \\
&= & \left( \dfrac{(2N_r^2+1)^2 - 9}{36} \right)^{\frac{N_\theta}{2}}.
\end{eqnarray*} It is clear that $det(P_1)=0$ if and only if $N_r=1$. Thus,  $det(P_1)\neq 0$ for each even integer $N_{\theta} \geq 2$ and each integer $N_r >1$.
\end{proof}

Finally, the approximation of $\Delta_{(r,\theta)}$ at the interior collocation points $(r_i,\theta_j)$  for all $i=2,..., \frac{N_r+1}{2}$ and $j=1,...,N_{\theta}$ is 

\begin{eqnarray*}
\Delta_{(r,\theta)} \; u \approx D_{\Delta_{(r,\theta)}} \vec{u}^* & =  & D^1_{\Delta_{(r,\theta)}} \vec{u}^1   + D^2_{\Delta_{(r,\theta)}} \vec{u} \\  & = & D^1_{\Delta_{(r,\theta)}}\left[  P_1^{-1} \left[\vec{g} - P_2  \vec{u} \right] \right] + D^2_{\Delta_{(r,\theta)}} \vec{u}  \\
 & = & D^1_{\Delta_{(r,\theta)}}  P_1^{-1} \vec{g}  + \left[D^2_{\Delta_{(r,\theta)}} - D^1_{\Delta_{(r,\theta)}} P_1^{-1}  P_2 \right]\vec{u}.
\end{eqnarray*} 

\noindent 
Moreover, we observe that the affine transformation $T^{Ne}_{\Delta_{(r,\theta)}} : \mathbb{R}^{ (\frac{N_r-1}{2})N_{\theta}} \rightarrow \mathbb{R}^{ (\frac{N_r-1}{2})N_{\theta}}$ defined as   \begin{eqnarray} \label{laplace2}
T^{Ne}_{\Delta_{(r,\theta)}}(\vec{u}) = D^{Ne}_{\Delta_{(r,\theta)}} \> \vec{u} + \overrightarrow{W^{Ne}_{\Delta_{(r,\theta)}}},
\end{eqnarray} where $ D^{Ne}_{\Delta_{(r,\theta)}} = D^2_{\Delta_{(r,\theta)}}- D^1_{\Delta_{(r,\theta)}}\>P_1^{-1}\>P_2 $, and $\overrightarrow{W^{Ne}_{\Delta_{(r,\theta)}}} = D^1_{\Delta_{(r,\theta)}} \> P_1^{-1} \vec{g}$, discretizes  $\Delta_{(r,\theta)}$ on $[0,R)\times(0,2\pi]$ subjected to nonhomogeneous Neumann boundary conditions.


\subsubsection{Nonhomogeneous Robin boundary conditions}
In this section, we assume that $a(\theta)\>u(R,\theta)+b(\theta)\>  \frac{\partial u}{\partial r} (R,\theta))=h(\theta)$, where the functions $a,b$ and $h$ are continuous on $[0,2\pi]$ and satisfy $a(\theta)\> b(\theta) > 0$ for all $\theta \in [0,2\pi]$. To describe this boundary conditions, some  notations are required: we denote
$\vec{h}=  \sum_{j=1}^{N_{\theta}} \, h(\theta_j) \, \vec{e}_{j}$ and, given $c\in \{a,b\}$, we denote $M_c$ the diagonal matrix satisfying $(M_c)_{j,j}=c(\theta_j)$ for $j=1,...,N_{\theta}$. Therefore,
  \begin{equation}\label{cccc}
 \vec{h} =  (M_a+M_b\>P_1) \> \vec{u}^1  + M_b\>P_2 \>  \vec{u}
\end{equation} where the matrices $P_1$ and $P_2$ are defined in \eqref{matrizp}. Hence, 
 \begin{eqnarray}\label{Recuprobin2d}
 \vec{u}^1 & = & (M_a + M_b \> P_1)^{-1} \> \vec{h} - (M_a + M_b \> P_1)^{-1} \> M_b \> P_2 \> \vec{u}. 
\end{eqnarray} The invertibility of the matrix $M_a + M_b \> P_1$ is proved in the following proposition: \begin{proposition}\label{invRobin}
The matrix $M_a + M_b \> P_1$ is nonsingular for each integer $N_{\theta} \geq 2$ even and each integer $N_r > 1$.
\end{proposition}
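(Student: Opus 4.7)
The plan is to reduce the claim to computing $N_\theta/2$ explicit $2\times 2$ determinants by exploiting the fact that every block involved is diagonal. First I would recycle the block form of $P_1$ already established in the proof of Proposition \ref{invP1}, namely
\[
P_1 \;=\; \begin{pmatrix} d\,I & e\,I \\ e\,I & d\,I \end{pmatrix},
\]
where $d=(D_{N_r}^{(1)})_{1\,1}$, $e=(D_{N_r}^{(1)})_{1\,N_r+1}$, and $I$ is the identity of size $\tfrac{N_\theta}{2}\times\tfrac{N_\theta}{2}$. Partitioning $M_a=\mathrm{diag}(M_a^{(1)},M_a^{(2)})$ and $M_b=\mathrm{diag}(M_b^{(1)},M_b^{(2)})$ conformably (each $M_c^{(k)}$ still diagonal, holding the values of $c$ at the corresponding angles), one obtains
\[
M_a+M_b\,P_1 \;=\; \begin{pmatrix} M_a^{(1)}+d\,M_b^{(1)} & e\,M_b^{(1)} \\[2pt] e\,M_b^{(2)} & M_a^{(2)}+d\,M_b^{(2)} \end{pmatrix}.
\]

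Next I would note that because all four blocks on the right are diagonal, a simultaneous row/column permutation that groups together index $q$ of the first half with index $q$ of the second half (for $1\le q\le N_\theta/2$) turns $M_a+M_b P_1$ into a block-diagonal matrix whose $q$-th block is
\[
B_q \;=\; \begin{pmatrix} a(\theta_q)+d\,b(\theta_q) & e\,b(\theta_q) \\[2pt] e\,b(\theta_{q+N_\theta/2}) & a(\theta_{q+N_\theta/2})+d\,b(\theta_{q+N_\theta/2}) \end{pmatrix}.
\]
Hence $\det(M_a+M_b P_1)=\prod_{q=1}^{N_\theta/2}\det B_q$, and the problem reduces to showing each $\det B_q\neq0$.

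For that, I would use the Robin hypothesis $a(\theta)b(\theta)>0$, which in particular gives $b(\theta_s)\neq0$ and $\lambda_s:=a(\theta_s)/b(\theta_s)>0$ for every angle $\theta_s$. Factoring,
\[
\det B_q \;=\; b(\theta_q)\,b(\theta_{q+N_\theta/2})\,\Bigl[(\lambda_q+d)(\lambda_{q+N_\theta/2}+d)-e^2\Bigr].
\]
The computation in the proof of Proposition \ref{invP1} already yields $d>0$ and $d^2-e^2=((2N_r^2+1)^2-9)/(36R^2)>0$ for $N_r>1$. Since $\lambda_q,\lambda_{q+N_\theta/2}>0$ and $d>0$, one has $(\lambda_q+d)(\lambda_{q+N_\theta/2}+d)>d^2>e^2$, so the bracket is strictly positive; combined with $b(\theta_q)b(\theta_{q+N_\theta/2})\neq0$, this gives $\det B_q\neq 0$. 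Multiplying over $q$ yields $\det(M_a+M_b P_1)\neq 0$.

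There is really no hard step here: once the block-diagonal reduction is made, the inequality $d^2>e^2$ and the sign condition $ab>0$ do all the work. The one point to be careful about is the sign structure in the last inequality—if one expands $\det B_q$ without first pulling out $b(\theta_q)b(\theta_{q+N_\theta/2})$, the four terms can have mixed signs when $a$ and $b$ change sign between $\theta_q$ and $\theta_{q+N_\theta/2}$; dividing by the (nonzero) product $b(\theta_q)b(\theta_{q+N_\theta/2})$ as above packages everything neatly and avoids a case analysis.
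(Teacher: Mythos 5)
Your proof is correct and follows essentially the same route as the paper: both exploit the block structure of $M_a+M_bP_1$ with diagonal blocks to reduce $\det(M_a+M_bP_1)$ to a product of $N_\theta/2$ explicit $2\times2$ determinants (you via an interleaving permutation, the paper via the Schur complement $\det(B)\det(E-DB^{-1}C)$, which give the same product), and both then conclude from $a(\theta)b(\theta)>0$ together with $\bigl((D_{N_r}^{(1)})_{1\,1}\bigr)^2>\bigl((D_{N_r}^{(1)})_{1\,N_r+1}\bigr)^2$. Your final step — factoring out $b(\theta_q)b(\theta_{q+N_\theta/2})$ and using $\lambda_s=a(\theta_s)/b(\theta_s)>0$ and $d>0$ to get $(\lambda_q+d)(\lambda_{q+N_\theta/2}+d)>d^2>e^2$ — is in fact a cleaner and more explicit justification than the paper's terse assertion that the contradictory equality ``cannot be right due to $a(\theta)b(\theta)>0$'', which as written leaves the sign case analysis to the reader.
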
 \begin{proof}
The proof is based on the following block structure of the matrix $M_a + M_b \> P_1$: 
\medskip
 $$ {\small \left( \begin{array}{ccc|ccc}
a(\theta_1)+b(\theta_1)(D_{N_r}^{(1)})_{1\>1} &        &     & b(\theta_1)(D_{N_r}^{(1)})_{1\>N_r+1} & &  \\
                                & \ddots &     &     & \ddots &               \\
 & &  a(\theta_{\frac{N_\theta}{2}})+b(\theta_{\frac{N_\theta}{2}})(D_{N_r}^{(1)})_{1\>1} & & & b(\theta_{\frac{N_\theta}{2}})(D_{N_r}^{(1)})_{1\>N_r+1} \\
\hline
b(\theta_{\frac{N_\theta}{2}+1})(D_{N_r}^{(1)})_{1\>N_r+1} & & &  a(\theta_{\frac{N_\theta}{2}+1})+b(\theta_{\frac{N_\theta}{2}+1})(D_{N_r}^{(1)})_{1\>1} & & \\
                                & \ddots &     &     & \ddots &               \\
 & & b(\theta_{N_{\theta}})(D_{N_r}^{(1)})_{1\>N_r+1}& & &   a(\theta_{N_{\theta}})+b(\theta_{N_{\theta}})(D_{N_r}^{(1)})_{1\>1} 
\end{array}\right)}.$$
\medskip
where
\medskip
 $$det \left( \begin{array}{cllllll}  
a(\theta_1)+b(\theta_1)(D_{N_r}^{(1)})_{1\>1} &        & \\
                                & \ddots & \\
 & &  a(\theta_{\frac{N_\theta}{2}})+b(\theta_{\frac{N_\theta}{2}})(D_{N_r}^{(1)})_{1\>1} \end{array}\right) = \prod_{j=1}^{\frac{N_\theta}{2}} \left[ a(\theta_j) + b(\theta_j)(D_{N_r}^{(1)})_{1\>1}\right] \neq 0.$$ 
 \medskip
In this direction, the next notation  
 $$M_a + M_b \> P_1 = \left( \begin{array}{c|c}
B & C \\
\hline
D & E 
\end{array}\right)$$
provides us with 
\begin{eqnarray*}
det \,(M_a + M_b \> P_1) & = & det(B) \> det(E-DB^{-1}C) \\
& = & \prod_{j=1}^{\frac{N_\theta}{2}}  \left[ \left(a(\theta_j) + b(\theta_j)(D_{N_r}^{(1)})_{1\>1}\right) \left(a(\theta_{j+\frac{N_\theta}{2}}) + b(\theta_{j+\frac{N_\theta}{2}})(D_{N_r}^{(1)})_{1\>1}\right) \right.\\ 
& & \>\>\>\>\>\>\>\>  \left. -b(\theta_j)b(\theta_{j+\frac{N_\theta}{2}})\left((D_{N_r}^{(1)})_{1\>N_r+1}\right)^2\right].
\end{eqnarray*} Arguing by contradiction and supposing that $det(M_a + M_b \> P_1) = 0$, we yield that there exists $j_0 \in \{1,...,\frac{N_\theta}{2}\}$ such that 
\begin{eqnarray*} 
a(\theta_{j_0})a(\theta_{j_0+\frac{N_\theta}{2}})+a(\theta_{j_0})b(\theta_{j_0+\frac{N_\theta}{2}})(D_{N_r}^{(1)})_{1\>1} + b(\theta_{j_0})a(\theta_{j_0+\frac{N_\theta}{2}})(D_{N_r}^{(1)})_{1\>1} = \\
 \quad  \quad \quad \quad \quad
b(\theta_{j_0})b(\theta_{j_0+\frac{N_\theta}{2}})\left[\left((D_{N_r}^{(1)})_{1\>N_r+1}\right)^2  -\left((D_{N_r}^{(1)})_{1\>1}\right)^2\right].
 \end{eqnarray*}
Notwithstanding, the above equality  cannot be right due to $a(\theta)b(\theta)>0$ for all $\theta \in [0,2\pi]$. Thus $det(M_a + M_b \> P_1)\neq 0$  for each integer $N_{\theta} \geq 2$ even and each integer $N_r > 1$.
\end{proof} 
 Finally, we approximate for all $i=2,..., \frac{N_r+1}{2}$ and $j=1,...,N_{\theta}$,  $\Delta_{(r,\theta)}\>u$  at the collocation points $(r_i,\theta_j)$ as follows
 \medskip
   \begin{eqnarray*}
\Delta_{(r,\theta)} \; u \approx D_{\Delta_{(r,\theta)}} \vec{u}^* &= &  D^1_{\Delta_{(r,\theta)}} \vec{u}^1 + D^2_{\Delta_{(r,\theta)}} \vec{u} \\
& =& D^1_{\Delta_{(r,\theta)}} \left((M_a + M_b \> P_1)^{-1} \> \vec{h} - (M_a + M_b \> P_1)^{-1} \> M_b \> P_2 \> \vec{u} \right) + D^2_{\Delta_{(r,\theta)}} \vec{u} \\
&= & D^1_{\Delta_{(r,\theta)}}\>(M_a+M_b P_1)^{-1}\vec{h}  + \left( D^2_{\Delta_{(r,\theta)}} - D^1_{\Delta_{(r,\theta)}}\>(M_a+M_b P_1)^{-1}\>M_b\> P_2 \right)\> \vec{u}. 
\end{eqnarray*} 
\medskip
Hence, the affine transformation $T^{R}_{\Delta_{(r,\theta)}} : \mathbb{R}^{ (\frac{N_r-1}{2})N_{\theta}} \rightarrow \mathbb{R}^{ (\frac{N_r-1}{2})N_{\theta}}$ defined as \begin{eqnarray} \label{laplace3}
T^{R}_{\Delta_{(r,\theta)}}(\vec{u}) = D^{R}_{\Delta_{(r,\theta)}} \> \vec{u} + \overrightarrow{W^{R}_{\Delta_{(r,\theta)}}},
\end{eqnarray} where $ D^{R}_{\Delta_{(r,\theta)}} =  D^2_{\Delta_{(r,\theta)}}- D^1_{\Delta_{(r,\theta)}}\>(M_a+M_b P_1)^{-1}\>M_b\> P_2$ and, $\overrightarrow{W^{R}_{\Delta_{(r,\theta)}}} =  D^1_{\Delta_{(r,\theta)}}\>(M_a+M_b P_1)^{-1}\> \vec{h}$, discretizes  $\Delta_{(r,\theta)}$ on $[0,R)\times(0,2\pi]$ subjected to nonhomogeneous Robin boundary conditions. 


\subsection{Polar differentiation matrix of the biharmonic operator}

This section addresses a discretization of the biharmonic operator $\Delta^2_{(r,\theta)}$ in the disk of radius $R$. It follows from (\ref{numeincog}), that
\begin{eqnarray*} 
\left. \frac{\partial^4 u_{\frac{N_r+1}{2},N_{\theta}}}{\partial r^4}\right|_{(r_i,\theta_j)} & = &\left(\dfrac{1}{R^4}\right) \sum_{k=1}^\frac{N_r+1}{2} \sum_{l=1}^{N_{\theta}} a_{k,l} \left[ S_{N_{\theta}}(\theta_j-\theta_l) ) \dfrac{d^4 \widehat{L_k}}{dy^4}(y_i) + S_{N_{\theta}}(\theta_j-\theta_{l+\frac{N_\theta}{2}})\dfrac{d^4 \widehat{L_{N_r+2-k}}}{dy^4}(y_i) \right] 
\\ & & \\
& = & \left(\dfrac{1}{R^4}\right) \sum_{k=1}^\frac{N_r+1}{2} \left( a_{k,j}  \dfrac{d^4\widehat{L_k}}{dy^4}(y_i) +  a_{k,j+\frac{N_\theta}{2}} \dfrac{d^4 \widehat{L_{N_r+2-k}}}{dy^4}(y_i) \right),
\end{eqnarray*}

\begin{eqnarray*} 
\left. \frac{\partial^3 u_{\frac{N_r+1}{2},N_{\theta}}}{\partial r^3}\right|_{(r_i,\theta_j)} & = &\left(\dfrac{1}{R^3}\right) \sum_{k=1}^\frac{N_r+1}{2} \sum_{l=1}^{N_{\theta}} a_{k,l} \left[ S_{N_{\theta}}(\theta_j-\theta_l) ) \dfrac{d^3 \widehat{L_k}}{dy^3}(y_i) + S_{N_{\theta}}(\theta_j-\theta_{l+\frac{N_\theta}{2}})\dfrac{d^3 \widehat{L_{N_r+2-k}}}{dy^3}(y_i) \right] 
\\ & & \\
& = & \left(\dfrac{1}{R^3}\right) \sum_{k=1}^\frac{N_r+1}{2} \left( a_{k,j}  \dfrac{d^3\widehat{L_k}}{dy^3}(y_i) +  a_{k,j+\frac{N_\theta}{2}} \dfrac{d^3 \widehat{L_{N_r+2-k}}}{dy^3}(y_i) \right),
\end{eqnarray*} 
\medskip
  \begin{equation}
\left. \frac{\partial^4 u_{\frac{N_r+1}{2},N_{\theta}}}{\partial \theta^4}\right|_{(r_i,\theta_j)} = \sum_{k=1}^\frac{N_r+1}{2} \sum_{l=1}^{N_{\theta}} a_{k,l} \left( S''''_{N_{\theta}}(\theta_j-\theta_l) L_k(r_i) + S''''_{N_{\theta}}(\theta_j-\theta_{l+\frac{N_\theta}{2}}) L_{N_r+2-k}(r_i) \right)
=  \sum_{l=1}^{N_{\theta}} a_{i,l}  S''''_{N_{\theta}}(\theta_j-\theta_l), 
\end{equation}
\medskip
\begin{equation}
\left. \frac{\partial^3 u_{\frac{N_r+1}{2},N_{\theta}}}{\partial r  \, \partial \theta^2}\right|_{(r_i,\theta_j)} = \left(\dfrac{1}{R}\right)\sum_{k=1}^\frac{N_r+1}{2} \sum_{l=1}^{N_{\theta}} a_{k,l} \left( S''_{N_{\theta}}(\theta_j-\theta_l) \dfrac{d\widehat{L_k}}{dy}(y_i) + S''_{N_{\theta}}(\theta_j-\theta_{l+\frac{N_\theta}{2}}) 
\dfrac{d\widehat{L_{N_r+2-k}}}{dy}(y_i)
\right)
\end{equation}
and
\medskip
\begin{equation}
\left. \frac{\partial^4 u_{\frac{N_r+1}{2},N_{\theta}}}{\partial \theta^4}\right|_{(r_i,\theta_j)} = \sum_{k=1}^\frac{N_r+1}{2} \sum_{l=1}^{N_{\theta}} a_{k,l} \left( S''''_{N_{\theta}}(\theta_j-\theta_l) L_k(r_i) + S''''_{N_{\theta}}(\theta_j-\theta_{l+\frac{N_\theta}{2}}) L_{N_r+2-k}(r_i) \right)
=  \sum_{l=1}^{N_{\theta}} a_{i,l}  S''''_{N_{\theta}}(\theta_j-\theta_l). 
\end{equation}
Therefore, rewriting $\Delta^2_{(r,\theta)}$ as
\begin{equation}
\Delta^2_{(r,\theta)}  \coloneqq \dfrac{\partial^4 }{\partial r^4}  
+  \dfrac{2}{r}  \dfrac{\partial^3 }{\partial r^3}
-  \dfrac{1}{r^2}  \dfrac{\partial^2 }{\partial r^2}
+ \dfrac{1}{r^3}  \dfrac{\partial }{\partial r} 
+ \dfrac{2}{r^2}  \dfrac{\partial^4 }{\partial r^2 \partial \theta^2} 
-  \dfrac{2}{r^3}  \dfrac{\partial^3 }{\partial r \partial \theta^2} 
  +  \dfrac{1}{r^4}  \dfrac{\partial^4 }{\partial \theta^4}
   +  \dfrac{4}{r^4}  \dfrac{\partial^2 }{\partial \theta^2}, 
\end{equation}
and concatenating all the above derivates of $u_{\frac{N_r+1}{2},N_{\theta}}$  at the collocation points, we obtain the following expression for the differentiation matrix $D_{\Delta^2_{(r,\theta)}}$ \begin{eqnarray*}\label{biarmonicodisco}
D_{\Delta^2_{(r,\theta)}} & = & \left(D_{1}^{(4)} + 2HD_{1}^{(3)}-H^2D_{1}^{(2)}+H^3D_{1}^{(1)}\right) \otimes  \left( \begin{array}{cllll} I & 0 \\ 0 & I \end{array} \right)\\
& &  +\left(D_{2}^{(4)} + 2HD_{2}^{(3)}-H^2D_{2}^{(2)}+H^3D_{2}^{(1)}\right) \otimes  \left( \begin{array}{cllll} 0 & I \\ I & 0 \end{array} \right) \\ 
& & + \left[ \left(2H^2D_{1}^{(2)}-2H^3D_{1}^{(1)}\right) \otimes  \left( \begin{array}{cllll} I & 0 \\ 0 & I \end{array} \right) +\left(2H^2D_{2}^{(2)}-2H^3D_{2}^{(1)}\right) \otimes  \left( \begin{array}{cllll} 0 & I \\ I & 0 \end{array} \right)\right] \left( I_{ \frac{N_r+1}{2}} \otimes D_{N_{\theta}}^{(2)} \right) \\
& & + H^4 \otimes \left[ D_{N_{\theta}}^{(4)} + 4D_{N_{\theta}}^{(2)} \right] ,
\end{eqnarray*}  where the matrices $H$ and $I$ are defined as in previous sections and in particular, $I_{ \frac{N_r+1}{2}}$ is the $( \frac{N_r+1}{2})\times( \frac{N_r+1}{2})$ identity matrix.

\subsubsection{Nonhomogeneous Dirichlet boundary conditions}
In this case, we assume the following boundary conditions \begin{eqnarray}\label{CC4to2D} 
\mbox{ $u(R,\theta)=f(\theta)$ and $\frac{\partial u}{\partial r} (R,\theta)= g(\theta)$}
\end{eqnarray}
 being both $f$ and $g$ continuous functions on $[0,2\pi]$. Therefore, \begin{eqnarray*}
 \vec{u}^1= \vec{f} &\hbox{   and  }   \left. \frac{\partial u_{\frac{N_r+1}{2},N_{\theta}}}{\partial r}\right|_{(r_1,\theta_j)} =g(\theta_j),  &1\leq j \leq N_{\theta}. 
 \end{eqnarray*}
Hereinafter in this paper, 
\begin{equation} 
\vec{u}^{**}= \sum_{i=3}^\frac{N_r+1}{2} \sum_{j=1}^{N_{\theta}} \, u_{\frac{N_r+1}{2},N_{\theta}}(r_i,\theta_j) \,\, \vec{e}_{N_{\theta}(i-3)+j}.
\end{equation}
and 
\begin{equation} 
\vec{u}^2= \sum_{j=1}^{N_{\theta}} \, u_{\frac{N_r+1}{2},N_{\theta}}(r_2,\theta_j) \,\, \vec{e}_{j}.
\end{equation}

\noindent Likewise, as we are dealing with the biharmonic equation and enforcing two boundary conditions in \eqref{CC4to2D}, we need to define now three submatrices of the matrix P given in \eqref{matrizp}:  \begin{eqnarray*}
(P_1)_{i\>j} = (P)_{i\>j}, & (P_2)_{i,\>j}=(P)_{i\>j+N_{\theta}}, & \mbox{for $1\leq i,j \leq N_{\theta},$}\\
(P_3)_{i\>j} = (P)_{i\>j+2N_{\theta}}, & & \mbox{for $1\leq i \leq N_{\theta}$ and $1 \leq j \leq  (\frac{N_r-3}{2})N_{\theta}$.}
\end{eqnarray*}
\noindent  
Therefore, we obtain from \ref{CC4to2D} that  
 \begin{eqnarray}\label{despu2}
\vec{g} =  P_1 \vec{f} + P_2 \vec{u}^2  + P_3 \vec{u}^{**}.
\end{eqnarray} The following proposition shows the invertivility of the matrix $P_2$. 

\begin{proposition}\label{pppp}
For each integer $N_{\theta} \geq 2$ even and each integer $N_r > 1$, the matrix $P_2$ is nonsingular.
\end{proposition}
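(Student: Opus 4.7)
The plan is to follow the same template that established Proposition 3.1 for $P_1$. From the Kronecker decomposition of $P$ in \eqref{matrizp}, the matrix $P_2$ coincides with the $(1,2)$ outer block of $P$, so
\[
P_2 \;=\; \begin{pmatrix} \alpha\,I & \beta\,I \\ \beta\,I & \alpha\,I \end{pmatrix},
\]
where $\alpha := (D_1^{(1)})_{1,2}$, $\beta := (D_2^{(1)})_{1,2}$ and $I$ denotes the $(N_\theta/2)\times(N_\theta/2)$ identity. Because the four scalar multiples of $I$ commute pairwise, the standard determinant identity for $2\times 2$ block matrices with commuting entries yields
\[
\det(P_2) \;=\; \det\bigl((\alpha-\beta)I\bigr)\,\det\bigl((\alpha+\beta)I\bigr) \;=\; (\alpha^2-\beta^2)^{N_\theta/2},
\]
so the problem reduces to showing $\alpha \neq \pm\beta$.

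Next I would compute $\alpha$ and $\beta$ from the closed-form expressions for the first-order CGL differentiation matrix scaled by $1/R$. The direct evaluation gives $\alpha = -2/\bigl(R(1-\cos(\pi/N_r))\bigr)$, while $\beta$ is simplified by invoking the centro-skew-symmetry $(D_{N_r})_{N_r+2-i,\,N_r+2-j} = -(D_{N_r})_{i,j}$, which itself follows from the reflection symmetry $y_{N_r+2-i}=-y_i$ of the CGL nodes. Applying the half-angle identities $1-\cos(\pi/N_r)=2\sin^2(\pi/(2N_r))$ and $1+\cos(\pi/N_r)=2\cos^2(\pi/(2N_r))$ would then collapse $\alpha\pm\beta$ into explicit trigonometric expressions.

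The expected outcome is entirely analogous to Proposition 2.4: a clean closed form of the shape
\[
\det(P_2) \;=\; \left(\frac{16\,\cos(\pi/N_r)}{R^{2}\sin^{4}(\pi/N_r)}\right)^{N_\theta/2},
\]
which is strictly positive for every odd $N_r>1$, because $0<\pi/N_r<\pi/2$ ensures $\cos(\pi/N_r)>0$ and $\sin(\pi/N_r)>0$, and the overall exponent $N_\theta/2$ is an integer since $N_\theta$ is even. I expect the only real obstacle to be the sign bookkeeping for $\beta$: because the row index of $(D_2^{(1)})_{1,2}$ lies in the lower half of the CGL grid, the antisymmetry step flips a sign whose parity depends on the indices, and one must confirm that the sign conventions in the definitions of $D_1^{(m)}$ and $D_2^{(m)}$ align with the block structure actually used to build $P$. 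Once that is pinned down, the conclusion follows from elementary trigonometry with no further case analysis.
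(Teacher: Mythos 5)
Your proof follows the paper's argument essentially verbatim: both identify $P_2$ with the $2\times 2$ block form $\left(\begin{smallmatrix}\alpha I & \beta I\\ \beta I & \alpha I\end{smallmatrix}\right)$, where $\alpha=(D_{N_r}^{(1)})_{1\,2}$ and $\beta=(D_{N_r}^{(1)})_{1\,N_r}$, and evaluate the determinant as $(\alpha^2-\beta^2)^{N_\theta/2}$ (the paper via a Schur complement, you via the commuting-block identity, which is marginally cleaner since it needs no invertibility hypothesis on a block). The only difference is that the paper stops at the observation that $\alpha^2\neq\beta^2$ because $y_2\neq y_{N_r}$, whereas you push through to the explicit closed form $\bigl(16\cos(\pi/N_r)/(R^{2}\sin^{4}(\pi/N_r))\bigr)^{N_\theta/2}$, which is correct for odd $N_r>1$ and makes the final nonvanishing step fully transparent.
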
 \begin{proof}
The matrix $P_2$ has the form: $$\left(\begin{array}{c|cc}
\left(D_{N_r}^{(1)}\right)_{1\>2}I & \left(D_{N_r}^{(1)}\right)_{1\>N_r}I\\
\hline
\left(D_{N_r}^{(1)}\right)_{1\>N_r}I & \left(D_{N_r}^{(1)}\right)_{1\>2}I
\end{array}\right),$$ where $I$ is the $\frac{N_\theta}{2} \times \frac{N_\theta}{2}$ identity matrix. Now, since $ \left(D_{N_r}^{(1)}\right)_{1\>N_r}I$ is nonsingular the determinant of $P_2$ gives:  \begin{eqnarray*}
det(P_2) & = & det((D_{N_r}^{(1)})_{1\>2}\> I)\>det((D_{N_r}^{(1)})_{1\>2}\> I - ((D_{N_r}^{(1)})_{1\>N_r}\> I)((D_{N_r}^{(1)})_{1\>2}\> I)^{-1}((D_{N_r}^{(1)})_{1\>N_r}\> I)) \\ 
& = & \left[\left((D_{N_r}^{(1)})_{1\>2}\right)^2 - \left((D_{N_r}^{(1)})_{1\>N_r}\right)^2\right]^{\frac{N_\theta}{2}}.
\end{eqnarray*} Therefore, $det(P_2) \neq 0$ due to $y_2 \neq y_{N_r}$. Thus, $P_2$ is nonsingular, for each integer $N_{\theta} \geq 2$ even and each integer $N_r > 1$.
\end{proof} 

Accordingly to  the Proposition (\ref{pppp}), we can isolate  $\vec{u}^2$ from (\ref{despu2}) as follows \begin{eqnarray}\label{recupBA2d}
 \vec{u}^2 = P_2^{-1} \vec{g} - P_2^{-1} P_1 \vec{f} - P_2^{-1}P_3 \vec{u}^{**}.
\end{eqnarray} 

\noindent Thus, the approximation of $\Delta^2_{(r,\theta)} u$ on $[0,R)\times(0,2\pi]$ at the interior collocation points $(r_i,\theta_j)$, for $i=3,...,N_r+1$ and $j=1,...,N_{\theta}$, remains as 
\begin{eqnarray*}
\Delta^2_{(r,\theta)} u \approx D_{\Delta^2_{(r,\theta)}} \vec{u}^*&=& 
D^1_{\Delta^2_{(r,\theta)}} \vec{u}^1 + D^2_{\Delta^2_{(r,\theta)}} \vec{u}^2
+ D^3_{\Delta^2_{(r,\theta)}} \vec{u}^{**}, \\
&=& 
D^1_{\Delta^2_{(r,\theta)}} \vec{f} + D^2_{\Delta^2_{(r,\theta)}} \left( P_2^{-1} \vec{g} - P_2^{-1} P_1 \vec{f} - P_2^{-1}P_3 \vec{u}^{**} \right)
+ D^3_{\Delta^2_{(r,\theta)}} \vec{u}^{**}, 
\\  & = & \left(D^1_{\Delta^2_{(r,\theta)}}-D^2_{\Delta^2_{(r,\theta)}}P_2^{-1}P_1\right)\vec{f} + D^2_{\Delta^2_{(r,\theta)}} P_2^{-1}\vec{g} + \left(D^3_{\Delta^2_{(r,\theta)}}-D^2_{\Delta^2_{(r,\theta)}} P_2^{-1}P_3\right)\vec{u}^{**}, 
\end{eqnarray*}

\noindent where $D^i_{\Delta^2_{(r,\theta)}}$'s are the submatrices of $D_{\Delta^2_{(r,\theta)}}$ whose entries are respectively \begin{eqnarray*}
\left(D^1_{\Delta^2_{(r,\theta)}}\right)_{i\>j}=\left(D_{\Delta^2_{(r,\theta)}}\right)_{i+2N_{\theta}\>j} ,& \left(D^2_{\Delta^2_{(r,\theta)}}\right)_{i\>j}=\left(D_{\Delta^2_{(r,\theta)}}\right)_{i+2N_{\theta}\>j+N_{\theta}},&
\left(D^3_{\Delta^2_{(r,\theta)}}\right)_{i\>j}=\left(D_{\Delta^2_{(r,\theta)}}\right)_{i+2N_{\theta}\>k+2N_{\theta}},  
\end{eqnarray*} for $1 \leq i \leq (\frac{N_r-1}{2})N_{\theta}$, \; $1 \leq j \leq N_{\theta}$ and $1 \leq k \leq (\frac{N_r-3}{2})N_{\theta}$.

In closing, we define the affine map $T^{BH}_{\Delta^2_{(r,\theta)}} : \mathbb{R}^{ (\frac{N_r-3}{2})N_{\theta}} \rightarrow \mathbb{R}^{ (\frac{N_r-3}{2})N_{\theta}}$  as 
\begin{eqnarray} \label{laplace4}
T^{BH}_{\Delta^2_{(r,\theta)}}(\vec{u}^{**}) = D^{BH}_{\Delta^2_{(r,\theta)}} \> \vec{u}^{**} + \overrightarrow{W^{BH}_{\Delta^2_{(r,\theta)}}},
\end{eqnarray} where  $ D^{BH}_{\Delta^2_{(r,\theta)}} = D^3_{\Delta^2_{(r,\theta)}}-D^2_{\Delta^2_{(r,\theta)}}P_2^{-1}P_3$ and $\overrightarrow{W^{BH}_{\Delta^2_{(r,\theta)}}} = \left(D^1_{\Delta^2_{(r,\theta)}}-D^2_{\Delta^2_{(r,\theta)}}P_2^{-1}P_1\right)\vec{f} + D^2_{\Delta^2_{(r,\theta)}}P_2^{-1}\vec{g}$. This affine map $T^{BH}_{\Delta^2_{(r,\theta)}}$ discretizes  $\Delta^2_{(r,\theta)}$ on $[0,R)\times(0,2\pi]$ subjected to nonhomogeneous Dirichlet boundary conditions. 


\subsection{General discrete formulation for the Laplace equation and the biharmonic equations in a disk} We describe two general abstract formulations of the problem (\ref{problemabidimensional}). In particular, in the case of the Laplace operator, from (\ref{laplace1}), (\ref{laplace2}) and (\ref{laplace3}) it follows that  
\begin{eqnarray}\label{sistema12d}
N^{\omega}_{\Delta_{(r,\theta)}} \coloneqq D^{\omega}_{\Delta_{(r,\theta)}}\vec{u} + F(r_2,...,r_{ \frac{N_r+1}{2}},\theta_1,...,\theta_{N_{\theta}},\vec{u}) + \overrightarrow{W^{\omega}_{\Delta_{(r,\theta)}} } =0.
\end{eqnarray}  The superscript $\omega \in \{D, Ne, R\}$ refers to the type of boundary conditions, i.e. Dirichet, Neumann or Robin, respectively.

Likewise, in the case of the biharmoic operator,  (\ref{laplace4}) yields \begin{eqnarray}\label{sistema22d}
N^{BH}_{\Delta^2_{(r,\theta)}} \coloneqq D^{BH}_{\Delta^2_{(r,\theta)}}\vec{u}^{**}+ F(r_3,...,r_{ \frac{N_r+1}{2}},\theta_1,...,\theta_{N_{\theta}},\vec{u}^{**}) + \overrightarrow{W^{BH}_{\Delta^2_{(r,\theta)}} } =0. 
\end{eqnarray} Moreover, we note that the system (\ref{sistema22d}) has $( \frac{N_r-3}{2}) N_{\theta}$ unknowns while the system (\ref{sistema12d}) has $ (\frac{N_r-1}{2})N_{\theta}$ unknowns.  As discussed above in Section \ref{seccionpd}, depending on the linearity or nonlinearity of the function $F$, different standard methods can be used to solve either  (\ref{sistema12d}) or  (\ref{sistema22d}) systems. Futher on, in  Table \ref{tabla2} we summarized the values $a_{k,l}=u(r_k,\theta_{l})$  of the approximate $u_{\frac{N_r+1}{2},N_{\theta}}(r,\theta)$ at the collocation points depending on each type of boundary condition. 

\begin{table}[ht]
\begin{center}
\begin{tabular}{|c|c|c|c|c|c|c|c|c|c|c|c|}
\hline
\multicolumn{3}{|c|}{$u_{\frac{N_r+1}{2},N_{\theta}}(r,\theta)  =  \sum \limits_{k=1}^\frac{N_r+1}{2} \sum \limits_{l=1}^{N_{\theta}} a_{k,l} \left[ S_{N_{\theta}}(\theta-\theta_l) L_k(r) + S_{N_{\theta}}(\theta-\theta_{l+\frac{N_\theta}{2}}) L_{N_r+2-k}(r) \right]$} \\
\hline
 Boundary conditions &  Coefficients $a_{k,l}$ & Solution's boundary values \\
\hline
 Nonhomogeneous Dirichlet     &  $a_{k,l}=u(r_k,\theta_l)$ and $a_{1,l}=f(\theta_l),$  & $u(r_1,\theta) = f(\theta)$\\
               &  for $2\leq k\leq  \frac{N_r+1}{2}$ and $1\leq l \leq N_{\theta}.$                           & \\ 
\hline
Nonhomogeneous Neumann &  $a_{k,l}=u(r_k,\theta_l),$  & $u(r_1,\theta_l)$ for $1 \leq l \leq N_{\theta}$ \\
& for $1\leq k\leq  \frac{N_r+1}{2}$ and $1\leq l \leq N_{\theta}.$ & are computed through (\ref{CCC}).\\
\hline
 Nonhomogeneous Robin &  $a_{k,l}=u(r_k,\theta_l),$  & $u(r_1,\theta_l)$ for $1 \leq l \leq N_{\theta}$ \\
& for $1\leq k\leq  \frac{N_r+1}{2}$ and $1\leq l \leq N_{\theta}$ & are computed through (\ref{Recuprobin2d}).\\
\hline
Nonhomogeneous Dirichlet &  $a_{k,l}=u(r_k,\theta_l)$ and $a_{1,l}=f(\theta_l)$  & $u(r_1,\theta) = f(\theta),$ and \\
biharmonic equation &  for $2\leq k\leq \frac{N_r+1}{2}$ and $1\leq l \leq N_{\theta}.$                           &  $u(r_2,\theta_l)$ for $1 \leq l \leq N_{\theta}$\\
& & are computed through (\ref{recupBA2d}).\\
\hline
\end{tabular}
\caption{ {\small  As seen above, $\vec{u}$ and $\vec{u}^{**}$ are calculated solving systems (\ref{sistema12d}) and (\ref{sistema22d}), respectively.}}
 \label{tabla2}
\end{center}
\end{table}


\subsection{Solving numerical examples of the Laplace and the biharmonic nonhomogeneous equations}
In this section, six numerical examples are developed, three correspond to the Laplace operator and three to the biharmonic operator. The developed simulations are computed using the differentiation matrices calculated in the previous subsections, either for $\Delta_{(r,\theta)}$ or $\Delta^2_{(r,\theta)}$.  

\begin{example}
The actual solution of the Laplace equation subjected to nonhomogeneous Dirichlet 
 boundary conditions \begin{eqnarray} \label{ej2d1} 
\left\{ \begin{array}{clllll} 
\Delta_{(r,\theta)}u & = 0 & (r,\theta) \in [0,1)\times[0,2\pi)\\
u(1,\theta) & = sin^3\theta & \theta \in [0,2\pi),
\end{array}\right.
\end{eqnarray}
 is given by $u(r,\theta)=0.75\, - 0.25\, r^3\, sin(3\,\theta).$ The approximate solution, in the case $\frac{N_r+1}{2}=28$ 
  and $N_{\theta}=60$, takes the form 
\begin{equation} u_{28,60}(r,\theta)  =  \sum_{k=1}^{28} \sum_{l=1}^{60} a_{k,l} \left[ S_{N_{\theta}}(\theta-\theta_l) L_k(r) + S_{N_{\theta}}(\theta-\theta_{l+\frac{N_\theta}{2}}) L_{N_r+2-k}(r) \right]. 
\end{equation}
The Figure \ref{dirichlet2dplots} shows a graph of the computed solution $u_{28,60}(r,\theta)$  and its absolute error. Nonetheless, in the Table \ref{tabladirichlet}, we list the maximum errors for different values of $\frac{N_r+1}{2}$ and $N_{\theta}$. 

\begin{figure}[h]
\begin{center}
\begin{tabular}{lll}
\includegraphics[width=0.5\columnwidth]{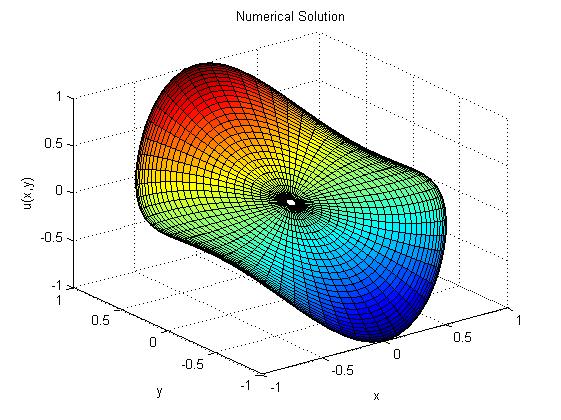} 
& \includegraphics*[width=0.5\columnwidth]{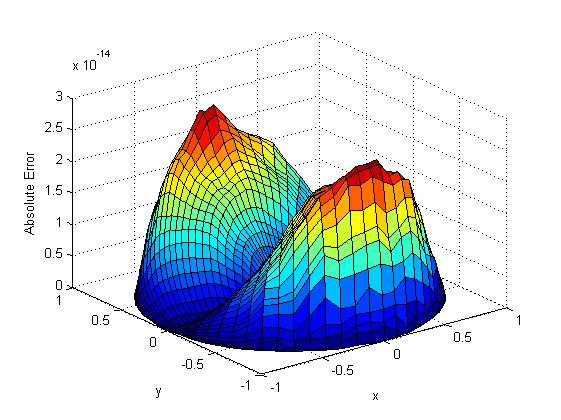}
\end{tabular}
\end{center}
\caption{ { \small (Left)  Computed solution $u_{28,60}(r,\theta)$ of (\ref{ej2d1}). (Right) The absolute error.}}
\label{dirichlet2dplots}
\end{figure}

\begin{table}[ht]
\begin{center}
\begin{tabular}{|c|c|c|c|c|c|c|c|c|c|c|c|}
\hline
 & Simulation 1 & Simulation 2 & Simulation 3 & Simulation 4 & Simulation 5\\
\hline
$\frac{N_r+1}{2}$  & 11 & 28 & 51 & 51 & 101\\
\hline
$N_{\theta}$ & 30 & 60 & 40 & 60 & 100\\
\hline
Maximum Error & 4.5242e-15 & 2.6887e-14 & 1.7447e-13 & 5.9730e-14 & 6.6391e-14\\
\hline
\end{tabular}
\caption{ {\small Maximum errors in the Dirichlet problem for different choices of $\frac{N_r+1}{2}$ and $N_{\theta}$.}}
\label{tabladirichlet}
\end{center}
\end{table}

\end{example}

\begin{example}
Consider the exact solution of  \begin{eqnarray} \label{ej2d2}
\left\{ \begin{array}{clllll}
\Delta_{(r,\theta)}u - u & = r\,(2+5\,sin^2\theta)-r^3sin^2\theta & (r,\theta) \in [0,1)\times[0,2\pi)\\
u_r(1,\theta) & = 3\,sin^2\theta & \theta \in [0,2\pi),
\end{array}\right. 
\end{eqnarray}
which is given by  $u(r,\theta) = r^3\,sin^2\theta$. Once again, we have computed the maximum errors for different values of $\frac{N_r+1}{2}$ and $N_{\theta}$, which are collected in the Table \ref{tablaneumann}. The Figure \ref{neumann2dplots} shows the plots of the numerical solution and the absolute error for $\frac{N_r+1}{2}=31$ and $N_{\theta}=50$. The maximum error obtained with this choice can be found in the Table \ref{tablaneumann}. 
 
\end{example}

\begin{example} The nonlinear Fisher equation \begin{eqnarray} \label{ej2d3}
\left\{ \begin{array}{clllll}
-\Delta_{(r,\theta)}u & = 3 u-u^2 & (r,\theta) \in [0,1)\times[0,2\pi)\\
u(1,\theta)+u_r(1,\theta) & = 3 & \theta \in [0,2\pi),
\end{array}\right. 
\end{eqnarray}
This equation has its unique positive solution given by $u \equiv 3$. In Table \ref{tablarobin} we have  collected some maximum errors, computed for different values of $\frac{N_r+1}{2}$ and $N_{\theta}$. A plot of the numerical solution and the absolute error can be found in the Figure \ref{robin2dplots} for $\frac{N_r+1}{2}=31$ and $N_{\theta}=50$. The maximum error obtained with this choice can be found in the Table \ref{tablarobin}. 
\end{example}
\begin{figure}[h]
\begin{center}
\begin{tabular}{lll}
\includegraphics[width=0.5\columnwidth]{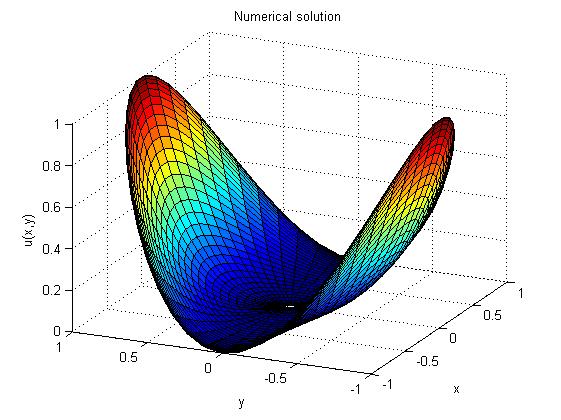} 
& \includegraphics*[width=0.5\columnwidth]
{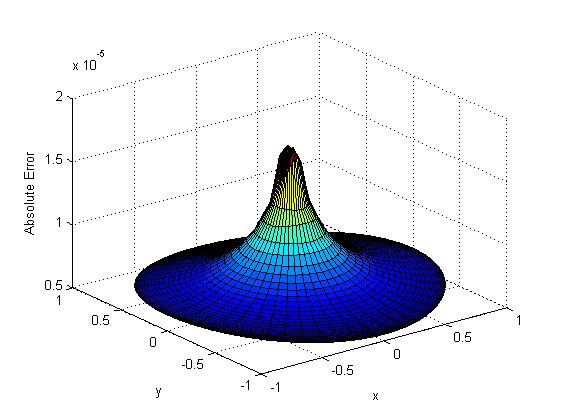}
\end{tabular}
\end{center}
\caption{ { \small (Left)  Computed solution $u_{31,50}(r,\theta)$ of (\ref{ej2d2}). (Right) The absolute error.}}
\label{neumann2dplots}
\end{figure}

\begin{table}[h]
\begin{center}
\begin{tabular}{|c|c|c|c|c|c|c|c|c|c|c|c|}
\hline
 & Simulation 1 & Simulation 2 & Simulation 3 & Simulation 4 \\
\hline
$\frac{N_r+1}{2}$ & 31 &  51 & 101 & 151\\
\hline
$N_{\theta}$ & 50 & 40 & 40 & 40\\
\hline
Maximum Error & 2.4389e-04 & 9.5423e-05 & 2.5333e-05 & 1.1491e-05 \\
\hline
\end{tabular}
\caption{ {\small Maximum errors in the Neumann problem (\ref{ej2d2}) for different choices  of $\frac{N_r+1}{2}$ and $N_{\theta}$.}}
\label{tablaneumann}
\end{center}
\end{table}

\begin{figure}[h!]
\begin{center}
\begin{tabular}{lll}
\includegraphics[width=0.5\columnwidth]{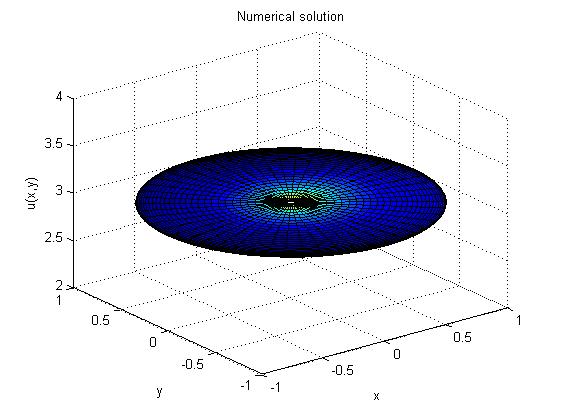} 
& \includegraphics*[width=0.5\columnwidth]{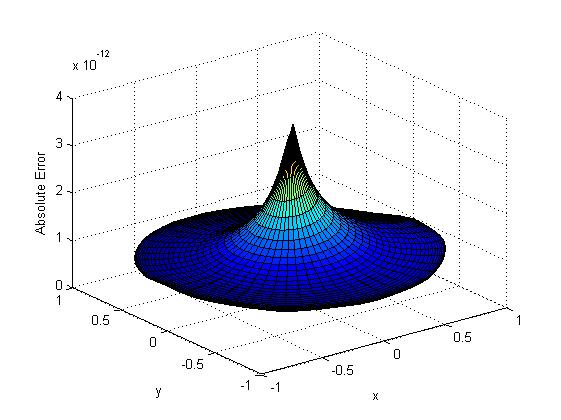}
\end{tabular}
\end{center}
\caption{ { \small (Left)  Computed solution $u_{28,60}(r,\theta)$ of (\ref{ej2d3}). (Right) The absolute error.}}
\label{robin2dplots}
\end{figure}

\begin{table}[ht]
\begin{center}
\begin{tabular}{|c|c|c|c|c|c|c|c|c|c|c|c|}
\hline
 & Simulation 1 & Simulation 2 & Simulation 3 & Simulation 4 & Simulation 5\\
\hline
$\frac{N_r+1}{2}$ & 11 & 31 & 31 & 101 & 101\\
\hline
$N_{\theta}$ & 40 & 50 & 100 & 30 & 50\\
\hline
Maximum Error & 2.9168e-12 & 4.2902e-11 & 1.1023e-10 & 1.1723e-09 & 1.7640e-09\\
\hline
\end{tabular}
\caption{ {\small Robin problem (\ref{ej2d3}): Maximum error for different values of $\frac{N_r+1}{2}$ and $N_{\theta}$.}}
\label{tablarobin}
\end{center}
\end{table}


\begin{example}
Consider the biharmonic equation   \begin{eqnarray} \label{BH2d}
\left\{ \begin{array}{clllll}
\Delta^2_{(r,\theta)}u  & = 0 & (r,\theta) \in [0,1)\times[0,2\pi)\\
u(1,\theta) & = -0.25 & \theta \in [0,2\pi),\\
u_r(1,\theta) & = -0.5(1+cos(\theta)) & \theta \in [0,2\pi),
\end{array}\right. 
\end{eqnarray}
whose exact solution is $u(r,\theta)=0.25(1-r^2)(1+r\>cos(\theta))-0.25$. We compute the approximate  solution $u_{\frac{N_r+1}{2}, N_{\theta}}(r,\theta)$, in the case $\frac{N_r+1}{2}=62$ and $N_{\theta}=40$. Here, the maximum error between $u_{62,40}(r,\theta)$ and the exact solution $u(r,\theta)$ is $8.1766e-04$. The Figure \ref{BH} shows a plot of the computed solution $u_{62,40}(r,\theta)$. 
\begin{figure}[h]
\begin{center}
\includegraphics[width=0.6\columnwidth]{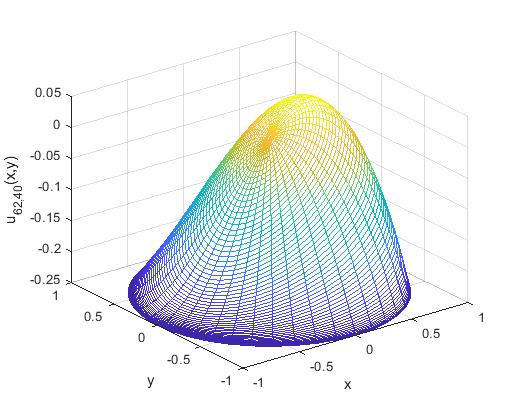} 
\end{center}
\caption{ { \small   Computed solution $u_{62,40}(x,y)$ of (\ref{BH2d}) in cartesian coordinates. }}
\label{BH}
\end{figure}

\end{example}


\begin{example}
The exact solution of the biharmonic equation   \begin{eqnarray*} \label{BH2d1}
\left\{ \begin{array}{clllll}
\Delta^2_{(r,\theta)}u  & = 0 & (r,\theta) \in [0,1)\times[0,2\pi)\\
u(1,\theta) & = cos(2\theta) & \theta \in [0,2\pi),\\
u_r(1,\theta) & = 0 & \theta \in [0,2\pi),
\end{array}\right. 
\end{eqnarray*}
is  $u(r,\theta)=(2r^2-r^4)cos(2\theta))$. We compute the approximate  solution $u_{\frac{N_r+1}{2}, N_{\theta}}(r,\theta)$, in the case $\frac{N_r+1}{2}=48$ and $N_{\theta}=40$. Here, the maximum error between $u_{48,40}(r,\theta)$ and the exact solution $u(r,\theta)$ is $1.9727e-04$ improving the result in \cite{YUref}. The Figure \ref{BH1} shows a plot of the computed solution $u_{48,40}(x,y)$. 
\begin{figure}[h]
\begin{center}
\includegraphics[width=0.6\columnwidth]{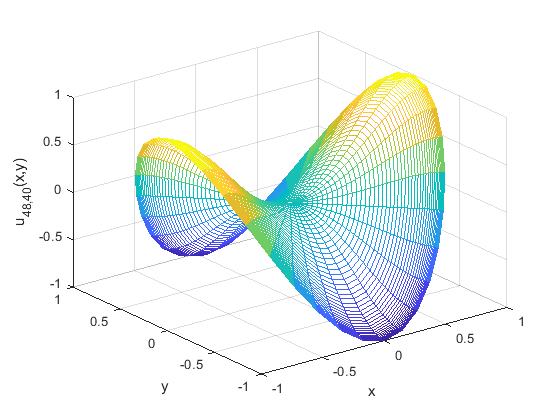} 
\end{center}
\caption{ { \small   Computed solution $u_{48,40}(x,y)$ of (\ref{BH2d1}) in cartesian coordinates. }}
\label{BH1}
\end{figure}

\end{example}

\begin{example}
In closing, we consider the biharmonic equation   \begin{eqnarray*} \label{BH2d2}
\left\{ \begin{array}{clllll}
\Delta^2_{(r,\theta)}u  & = 45cos(\theta) & (r,\theta) \in [0,1)\times[0,2\pi)\\
u(1,\theta) & = cos(2\theta) & \theta \in [0,2\pi),\\
u_r(1,\theta) & = 4cos(2\theta) & \theta \in [0,2\pi),
\end{array}\right. 
\end{eqnarray*}
whose exact solution is $u(r,\theta)=r^4\>cos(2\theta)$. We calculate numerically the approximate  solution $u_{\frac{N_r+1}{2}, N_{\theta}}(r,\theta)$, in the case $\frac{N_r+1}{2}=33$ and $N_{\theta}=60$. Here, the maximum error between $u_{33,60}(r,\theta)$ and the exact solution $u(r,\theta)$ is $4.9969e-05$. The Figure \ref{BH2} shows a plot of $u_{33,60}(x,y)$. 

\end{example}
\bigskip

\begin{figure}[h!]
\begin{center}
\includegraphics[width=0.6\columnwidth]{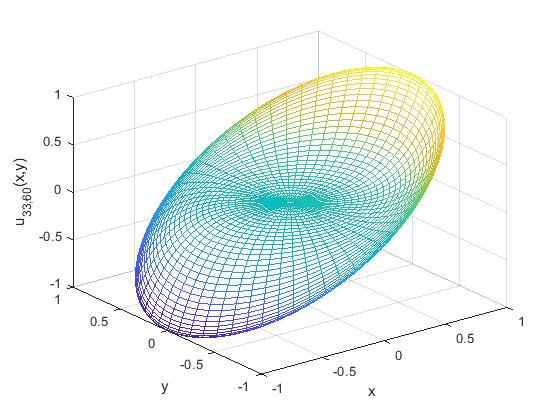} 
\end{center}
\caption{ { \small   Computed solution $u_{33,60}(x,y)$ of (\ref{BH2d1}) in cartesian coordinates. }}
\label{BH2}
\end{figure}

\section{Conclusions}
The differentiation matrices deduced in this paper will be of the utmost importance, since a large number of equations, which model a broad range of applications such as Navier-Stokes equations, are now subjected to nonhomogeneous boundary conditions and could hereafter be  solved with an efficient, simple and direct method. The construction and calculation of each differentiation matrix has been a cumbersome procedure. Nonetheless, we have  provided a clear development for each equation and for its respective Dirichlet, Neumann and Robin nonhomogeneous boundary conditions. 
\par
This paper is completed with a collection of linear and nonlinear numerical examples,  whose solutions exhibit a  spectral accuracy, underling, once again, the advantages of using collocation methods. Now more than ever, no lifting is needed.  

\section{Acknowledgments}
 This work has been partially supported by the Ministry of Economy and Competitiveness of Spain under Research Grant MAT2015-65899-P.



\begin{thebibliography}{99}


\bibitem {Ahues}  M. Ahues, A. Largillier, B. Limaye, Spectral Computations for Bounded Operators,  Chapman \& Hall/CRC, New York, 2001. 

\bibitem{Trefblock} J. Aurentz, L. Trefethen, Block operators and spectral discretizations, SIAM Review 59 (2) (2017), pp. 423-446.

\bibitem{LGBM}  L. Berbesi-Márquez, Solución numérica de problemas de valor de frontera  para ecuaciones diferenciales ordinarias, Tesis de Pregrado, Universidad de los Andes, Mérida, Venezuela, 2010.

\bibitem{BayonaFonberg} V. Bayona, N.Flyer, B. Fornberg, G.  Barnett, On the role of polynomials in RBF-FD approximations: II. Numerical solution of elliptic PDEs, Journal of Computational Physics 332 (2017), pp.257-273.

\bibitem{BernardidiscoNavier} Z. Belhachmi, C. Bernardi, A. Karageorghis,  Spectral element discretization of the circular driven cavity, Part II: The bilaplacian equation,  SIAM J. Numer. Anal. 38 (2001), pp. 1926-1960.


\bibitem{CY} C. Bernardi, Y. Maday, Spectral Methods, in: P.G. Ciarlet and J.-L. Lions (Eds.), Handbook of Numerical Analysis, vol. V, North-Holland, Amsterdam, 1997, pp. 209-485.


\bibitem{BernardidiscoLaplace}   C. Bernardi, A. Karageorghis, Spectral element discretization of the circular driven cavity, Part I: The Laplace equation, SIAM J. Numer. Anal. 36 (1999), pp. 1435-1465.




\bibitem{libro1}  O. M. Bucci, C. Gennarelli, C. Savarese, Fast and accurate near-field-far-field transformation by sampling interpolation of plane-polar measurements, IEEE transactions on antennas and propagation 39 (1) (1991), pp. 48-55.



\bibitem{CMAT} C. Canuto, M. Hussaini, A. Quarteroni, T. Zang, Spectral Methods. Fundamentals in Single Domains,  Springer Verlag, Berlin Heidelberg New York, 2006.

\bibitem{DriscollHale} T. Driscoll, N. Hale, Rectangular spectral collocation, IMA Journal of Numerical Analysis 36 (1)   (2016), pp. 108-132.



\bibitem{HWK}  H. Eisen, W. Heinrichs, K. Witsch, Spectral collocation methods and polar coordinate singularities, J. Comput. Phys. 96 (2)(1991), pp. 241-257.

\bibitem{rotatingdisk} B. T. Ellison, I. Cornet, Mass Transfer to a Rotating Disk, J. Electrochem. Soc. 118 (1) (1971), pp. 68-72.

\bibitem{finlayson} B. Finlayson, The Method of Weighted Residuals and Variational Principles, with Application in Fluid Mechanics, Heat and Mass Transfer, Volume 87, Academic Press, New York and London, 1972. 



\bibitem{Forpolar}  B. Fornberg, A pseudospectral approach for polar and spherical geometries, SIAM J. Sci. Comp. 16 (1995), pp. 1071-1081.

\bibitem{For} B. Fornberg, A pseudospectral fictitious point method for high order initial-boundary value problems,  SIAM J. Sci. Comput. 28 (5)  (2006), pp. 1716- 1729.




\bibitem{FunaroHein}  D. Funaro, W. Heinrich,  Some results about the pseudospectral approximation of one dimensional fourth order problems, Numer. Math. 58 (1990), pp. 399-418.

\bibitem{Sweers} F. Gazzola, H. Grunau, G. Sweers,  Polyharmonic boundary value problems. A monograph on positivity preserving and nonlinear higher order elliptic equations in bounded domains, Lecture Notes in Mathematics   1991, Springer, Berlin Heidelberg, 2010.

\bibitem{Rosa}  R. Gómez-Reñasco, J. López-Gómez, On the existence and numerical computation of classical and non-classical solutions for a family of elliptic boundary value problems,  Nonlinear Analysis 48 (2002), pp. 567--605.
 
 
\bibitem{GHO} D. Gottlieb, M. Hussaini, S. Orszag,  Theory and applications of spectral methods in: R. Voigt, D. Gottlieb, M. Hussaini (Eds.), Spectral Methods for Partial Differential Equations,  SIAM (1984),  Philadelphia , pp. 1-54.


\bibitem{GT}  D. Gottlieb, E. Turkel, Topics in spectral methods, in: F. Brezzi (Ed.), Numerical Methods in Fluid Dynamics, Lecture Notes in Mathematics 1127, Springer, Berlin  Heidelberg, 1985, pp. 115-155.





\bibitem{WD}  W. Huang, D. Sloan, Pole conditions for singular problems: the pseudospectral approximation, J. Comput. Phys. 107 (1993), pp. 254-261.



\bibitem {Hsu} S. Hsu, J. López-Gómez, L. Mei, M. Molina Meyer, A nonlocal problem from conservation biology, SIAM Journal on Mathematical Analysis. 46(6) (2014), pp. 4035-4059.

\bibitem {Trefftz} Z. Li, T. Lu, H. Hu, A. Cheng,  Trefftz and Collocation Methods, WIT Press, Cambridge, 2008.


\bibitem {metas} J. López-Gómez, Metasolutions of Parabolic Equations in Population Dynamics, CRC Press, Boca Raton, 2015.


\bibitem{JLGEDM} J. López-Gómez, J. C. Eilbeck, K. Duncan, M. Molina Meyer, Structure and numerical simulation of solution manifolds in a strong coupled elliptic system,  IMA J. Numer. Anal. 12 (1992), pp. 405-428.

\bibitem{JLGMMM} J. López-Gómez, M. Molina Meyer, Superlinear indefinite systems: beyond Lotka-Volterra models,  Journal of Differential Equations 221(2006), pp. 343-411.

\bibitem{JLGMMMiso}  J. López-Gómez,  M. Molina Meyer, Bounded components of positive solutions of abstract fixed point equations: mushrooms, loops and isolas. J. Differential Equations 209 (2005), pp. 416-441.

\bibitem{JLGMMMTELL}  J. López-Gómez,  M. Molina Meyer, A. Tellini, Intricate dynamics caused by facilitation in competitive environments within polluted habitat patches. European J. Appl. Math. 25 (2014), pp. 213-229.

\bibitem{LGMMR}J. López-Gómez, M. Molina Meyer, P. Rabinowitz, Global bifurcation diagrams of one nodesolutions in a class of degenerate boundary value problems. Discrete and continuous dynamical systems. Series B. 22 (3) (2017), pp. 923-946.


 
\bibitem{markovich} P. Markowich, Applied Partial Differential Equations: A Visual Approach,  Springer Verlag, Wien New York, 2006.

\bibitem{HughesMarsden} J. Marsden, T.Hughes, 
 Mathematical foundations of elasticity, Dover, New York, 1994.
 
\bibitem{Marvasti}  F. Marvasti,  Extension of Lagrange interpolation to 2-D nonuniform samples in polar coordinates,   IEEE Trans. on Circuits and Systems 37(4) (1989), pp. 567-568.




\bibitem{Muiteref}  B.K. Muite, A numerical comparison of Chebyshev methods for solving fourth order semilinear initial boundary value problems, Journal of Computational and Applied Mathematics, 234 (2010), pp. 317-342.



\bibitem{R}  R. Peyret, Spectral Methods for Incompressible Viscous Flow,  Applied Mathematical Sciences 148, Springer Verlag, Berlin Heidelberg New York, 2002.

\bibitem{PSP} P. S. Phang, Z. A. Majid, M. Suleiman, F. Ismail, Solving boundary value problems with Neumann conditions using direct method, World Applied Sciences Journal 21 (2013), pp. 129-133.



\bibitem{TFMFrank} {\sc F. R. Prieto-Medina}, Numerical simulation of positive solutions of the heterogeneous logistic equation in circular domains,  Master Thesis UC3M, Madrid, September (2013).

\bibitem {Blending} A. Quarteroni, Blending Fourier and Chebyshev interpolation,  J. Approx. Theory 51 (1987),  pp. 115-126.

\bibitem {QV} A. Quarteroni, A. Valli, Numerical Approximation of Partial Differential Equations, Springer Verlag, Berlin Heidelberg New York, 1997.




\bibitem{Selvaduarai} A. Selvaduarai, Partial Differential Equations in Mechanics 2: The Biharmonic Equation, Poisson's Equation, Springer Verlag, Berlin Heidelberg New York, 2000.
 
\bibitem{shen1} J. Shen, Efficient spectral-Galerkin methods III: polar and cylindrical geometries,  SIAM J. Sci. Comput. 18(6) (1997), pp. 1583-1604.  

\bibitem{shen2} J. Shen, New fast Chebyshev-Fourier algorithm for Poisson-type equations in polar geometries,  Applied Numerical Mathematics 33 (2000),  pp. 183-190.


\bibitem{Shenlibro}  J. Shen, T. Tang, L.  Wang, Spectral Methods: Algorithms, Analysis and Applications,  Springer Series in Computational Mathematics 41, Spinger Verlag, Berlin  Heidelberg New York, 2011.  

\bibitem{Igor} I. Shevchuk,  Turbulent heat and mass transfer over a rotating disk
for the Prandtl or Schmidt numbers much larger
than unity: an integral method, Heat Mass Transfer 45 (2009), pp. 1313-1321. 


\bibitem{Stark} H. Stark, Polar, spiral, and generalized sampling and interpolation, in: Robert J. Marks II (Ed.),  Advanced Topics in Shannon Sampling and Interpolation Theory, Springer Verlag, Berlin  Heidelberg New York, 1993, pp. 185-218.

\bibitem{Stenger} F. Stenger, Handbook of Sinc Numerical Methods, Chapman \& Hall/CRC Numerical Analysis and Scientific Computing Series, Boca Ratón, 2010. 

\bibitem{Towsendolver} A. Townsend, S. Olver, The automatic solution of partial differential equations using a global spectral method, J. Comput. Phys. 299 (2015), pp. 106-123.




\bibitem{Treflibro} L. Trefethen, Spectral Methods in MATLAB, SIAM, Philadelphia, 2000.

\bibitem{young} L. Young, Orthogonal collocation revisited, Computer Methods in Applied Mechanics and Engineering 345(1) (2019), pp. 1033-1076.

\bibitem{YUref} P. Yu, Z. Tian, A compact scheme for the streamfunction-velocity formulation of the 2D steady incompressible Navier-Stokes equations in polar coordinates, J. Sci Comput 56(1) (2013), pp. 165-189. 


\bibitem{Weideman} J. Weideman, S. Reddy, A MATLAB differentiation matrix suite, ACM Transactions on Mathematical Software 26(4) (2000), pp. 465-519.

\bibitem{Towsendwright} H. Wilber, A. Townsend, G. Wright, Computing with functions in spherical and polar geometries II. The disk, SIAM J. Sci. Comput. 39(3) (2016), pp. 238-262. 




\end{thebibliography}
\end{document}